\documentclass[a4paper]{article}
\usepackage[utf8x]{inputenc}
\usepackage{amsthm}
\usepackage{float}
\usepackage{mathtools,nccmath}
\usepackage{amssymb}
\usepackage{amsmath}
\usepackage{amsfonts}
\usepackage{latexsym}
\usepackage{enumitem}
\usepackage{url}
\usepackage{bbold}
\usepackage{xcolor}
\usepackage{tkz-tab}
\usepackage[linesnumbered,ruled,vlined]{algorithm2e}
\usepackage{algorithmic}
\usepackage{tikz}
\usetikzlibrary{calc,fit,spy}
\usepackage{pgfplots}
\usetikzlibrary{arrows.meta}
\usepgfplotslibrary{fillbetween}
\pgfplotsset{width=7cm,
        compat=1.5.1,
        standard/.style={
        axis x line=middle,
        axis y line=middle,
        enlarge x limits=0.15,
        enlarge y limits=0.15,
        every axis x label/.style={at={(current axis.right of origin)},anchor=north west},
        every axis y label/.style={at={(current axis.above origin)},anchor=north east}
    }
}
\usepackage{multirow}

 \newcommand\ForAuthors[1]
 {\par\smallskip                     
  \begin{center}
   \fbox
   {\parbox{0.9\linewidth}
    {\raggedright\sc--- #1}
   }
  \end{center}
  \par\smallskip                     %
 }
  \newcommand\comment[1]{}

\title{A Parametric Optimization Point-Of-View of Comparison Functions}

\author{Assalé Adjé\\ LAboratoire de Modélisation Pluridisciplinaire et Simulations\\
LAMPS\\ Université de Perpignan Via Domitia\\ France
}

\def\rr{\mathbb R}
\def\br{\overline{\rr}}
\def\rd{\rr^d}
\def\rm{\rr^m}
\def\nn{\mathbb N}
\def\norm#1{\| #1\|}

\def\agmx{\operatorname{Argmax}}
\def\agmn{\operatorname{Argmin}}

\def\supfg#1#2{{#1}^{\vee #2}}
\def\inffg#1#2{{#1}^{\wedge #2}}

\def\kclsi{\mathcal{K}_\infty}

\newtheorem{proposition}{Proposition}

\newtheorem{theorem}{Theorem}
\newtheorem{corollary}{Corollary}

\newtheorem{example}{Example}

\begin{document}
\maketitle

\begin{abstract}
In this paper, we obtain results about the positive definiteness, the continuity and the level-boundedness of two optimal value functions of specific parametric optimization problems. Those two optimization problems are generalizations of special functions called comparison functions which arise in dynamical systems stability theory. 
\end{abstract}

\noindent {\bf Keywords} : Comparison Functions, Parametric Optimization, Lyapunov Stability theory. 


\section{Introduction}

The paper exposes a problem arising in stability theory of dynamical systems (see e.g.~\cite{bhatia2006dynamical}) and in  parametric optimization (see e.g.~\cite{bank1982non}). The problem studied here is the analysis of some continuous properties of a function defined as the optimal value of an objective function over the sublevel of a fixed real-valued function for which the level is the function variable. On the other hand, when the constraint function is a norm, this parametric function is used as a \textit{comparison function}(see~\cite{DBLP:journals/mcss/Kellett14} for a comprehensive survey). In this context, when the objective function is continuous, strictly positive except at zero (where it is zero) and level-bounded, the constructed parametric function belongs to $\kclsi$. The set $\kclsi$ is the set of continuous functions on strictly positive reals which are strictly positive except at zero where it is zero, strictly increasing and which tend to plus infinity when the norm tends to infinity (see the discussions in~\cite[p. 98]{hahn1967stability} and~\cite[A.5]{khalil2002nonlinear}). Such functions permit to link norms of trajectories of a dynamical system with Lyapunov functions which constitute functional certificates to the stability of the dynamical system. Comparison functions are involved into the construction of converse Lyapunov theorems~\cite{classicalConverseKellet}. In general, the role of those theorems is to obtain the equivalence between Lyapunov functions existence and certain notions of stability.

The contribution of the paper in the parametric optimization field is to propose continuity results for the case where there is only one functional constraint and the level associated is a parameter. The results proposed in the paper do not require the nonemptiness of the strict sublevel set and do not involve convexity assumptions on the objective or the constraint function contrarily to~\cite{b073ecea-b76c-3262-ac64-b852e5e8abd0,5677d28d-1966-3228-887c-28440390ed89}.
Interested reader can consult~\cite{10.1007/BF02055196,bank1982non} for presentations of continuity results for optimal value functions of parametric optimization problems.  From comparison functions point-of-view, the purpose is to generalize the construction of comparison functions proposed by Hahn and Khalil for discontinuous functions. The continuity is useful for converse Lyapunov theorems and stability notions. The key argument to use comparison functions is their bijectivity as they are strictly increasing and continuous. For optimization problems on the trajectories of discrete-time systems~\cite{adje2018quadratic,adje2021quadratic,ahmadi2024robust}, we can extend the problem to discontinuous objective functions. We could exploit comparison functions to compute lower or upper  bounds for the objective functions. Therefore, the generalization of discontinuous comparison type functions is a preliminary work of the analysis of these new optimization problems. 

At first sight, the approach is standard and some of the results are based on Berge's maximum theorem~\cite[p.116-117]{berge1963topological} and some reformulations~\cite{thibault2023unilateral}. One of the result proposes a direct approach for which the set-valued mappings approach does not help.

The paper is organized as follows. Section~\ref{background} is devoted to necessary definitions and results on optimization theory and set-valued mappings theory. Section~\ref{results} presents the main results of the paper. We will find continuity results of the generalization of comparison type functions. We will also propose a study of the positive definiteness of those functions. Section~\ref{conclusion} concludes the paper and opens new potential applications.  

\section{Preliminaries}
\label{background}
We quickly recall necessary standard results for optimization theory and the necessary background to introduce Berge maximum theorem.

\subsection{Key Tools for Optimization}
\subsubsection{Single-Valued Functions Semicontinuity}
Let $X$ be a nonempty subset of $\rd$. We recall that a function $f:X\mapsto \br:=\rr\cup \{-\infty\}\cup \{+\infty\}$ is said to be {\it lower} (resp. {\it upper}) {\it semicontinuous} at $x\in X$ if either $f(x)=-\infty$ (resp. $f(x)=+\infty$) or for all $\varepsilon>0$ there exists $\eta>0$ satisfying for all $y\in X$ such that $\norm{x-y}\leq \eta$, we have $f(y)\geq f(x)-\varepsilon$ (resp. $f(y)\leq f(x)+\varepsilon$). From the definitions, we see immediately that a function $f$ is upper semicontinuous  at $x$ if and only if $-f$ is lower semicontinuous at $x$. We can define equivalently semicontinuity in terms of limits: $f$ is lower (resp. upper) semicontinuous at $x\in X$ if and only if for all $(x_k)_{k\in\nn}\subset X$ which converge to $x$ we have $\liminf_{k\to +\infty} f(x_k)\geq f(x)$ (resp. $\limsup_{k\to +\infty} f(x_k)\leq f(x))$.

Finally, the function is said to be {\it lower} (resp. {\it upper}) {\it semicontinuous} on $X$ if it is lower (resp. upper) semicontinuous at all $x\in X$. 

Let $\alpha$ be in a real,  we denote by $[f\leq \alpha]_X$ the {\it sublevel} of $f$ at level $\alpha$ i.e. the set $\{x\in X : f(x)\leq \alpha\}$. Similarly, we denote by $[\alpha\leq f]_X$ the {\it superlevel} at level $\alpha$ of $f$ that is the set $\{x\in X : \alpha\leq f(x)\}$. Global semicontinuity can be defined using sublevels or superlevels : $f$ is lower (resp. upper) semicontinuous on $X$ if and only if for all $\alpha \in\rr$, $[f\leq \alpha]_X$ (resp. $[\alpha\leq f]_X$) are closed. By extension, we will use the notation, for all $\alpha,\beta\in\rr$ such that $\alpha\leq \beta$, $[\alpha\leq f\leq \beta]_X$ for the set $\{x\in X : \alpha\leq f(x)\leq \beta\}$. We will remove the subscript $X$ when $X=\rd$.

Finally, $f:X\mapsto \rr$ is continuous at $x\in X$ if and only if it is lower and upper semicontinuous at $x\in X$.

Semicontinuity plays an important role in optimization theory. Indeed, the following optimization problems:
\[
\inf_{x\in C} f(x) \qquad \text{ and } \qquad \sup_{x\in C} g(x)
\]
both admit optimal solutions when $f$ is lower semicontinuous, $g$ is upper semicontinuous and $C$ is nonempty and compact. 
Sometimes we will use the notations $\inf_C f$ (resp. $\sup_C f$) to the infimum of $f$ over the set $C$ (resp. the supremum of $f$ over the set $C$).
\subsubsection{Level-Boundedness}
Let $X$ be an unbounded subset of $\rd$. A function $f:X\mapsto \rr$ is said to be level-bounded if and only if $\lim_{\norm{x}\to +\infty} f(x)=+\infty$. Hence, level-boundedness can be written as for all real $M$, there exists a positive real $r$ such that $\norm{x}>r$ implies that $g(x)> M$.
Finally, this is equivalent to the fact that for all $\alpha\in\rr$, the sets $[g\leq \alpha]$ are bounded (possibly empty).

From the optimization point-of-view, the level-boundedness can replace the compactness. Indeed, any lower semicontinuous level-bounded function $f:\rd\mapsto \rr$ is lower bounded on $\rd$ and has a minimizer. Furthermore, if we minimize a lower semicontinuous function over a nonempty sublevel of a level-bounded function, then the minimization problem has a finite optimal value and an optimal solution on the closure of the sublevel. 

Finally, obviously a level-bounded function $f$ satisfies $\sup_{x\in\rd} f(x)=+\infty$ and if $f$ is also continuous, then the equation $f(x)=s$ has a solution for all $s\geq \inf_{x\in\rd} f(x)$.    
\subsection{Set-valued Maps}
Our framework is placed in real finite dimensional vector spaces but the notions presented here can be generalized to topological spaces.
Let $X$ and $Y$ be two nonempty subsets of topological spaces. A set-valued map $M$ from $X$ to $Y$ (or a multivalued map or a multifunction) is denoted by $M:X \rightrightarrows Y$ and associates to $x\in X$ a subset of $Y$. For example, for any function $g:\rd\mapsto \rr$, the map which associates to $s\in\rr$ the set 
$\{x\in \rd : g(x)\leq s\}=[g\leq s]$ is a set-valued map. The domain of the set-valued map $M:X \rightrightarrows Y$ is the set $\{x\in X : M(x)\neq \emptyset\}$ whereas its graph is the set $\{(x,y)\in X\times Y : y\in M(x)\}$.

To introduce the Berge's maximum theorem, we need to define some continuity notions on set-valued maps. A set-valued map $M$ is said to be {\it lower hemicontinuous} at $x\in X$ if and only if for all open sets $U$ such that $M(x)\cap U\neq \emptyset$, there exists a neighborhood $V$ of $x$ (in $X$) such that $M(y)\cap U\neq \emptyset$ for all $y \in V$. The set-valued map is lower hemicontinuous on $X$ if it is lower hemicontinuous at each $x\in X$. A set-valued map $M$ is said to be {\it upper hemicontinuous} at $x\in X$ if and only if for all open sets $U$ such that $M(x)\subseteq U$, there exist a neighborhood $V$ of $x$ (in $X$) such that $M(y)\subseteq U$ for all $y \in V$. The set-valued map is upper hemicontinuous on $X$ if it is upper hemicontinuous at each $x\in X$. As for functions, a set-valued map which is lower and upper hemicontinuous at $x$ is said to be continuous at $x$. Again as for semicontinuity of functions, we have equivalences to deal with global hemicontinuity notions.

\begin{proposition}[Lower Hemicontinuity]
\label{lower}
Let $M$ be a set-valued map from $X$ to $Y$ (both are subsets of some topological spaces). The following assertions are equivalent:
\begin{enumerate}
\item $M$ is lower hemicontinuous;
\item For all open sets $U$, the set $\{x\in X : M(x)\cap U\neq \emptyset\}$ is open in $X$;
\item For all closed sets $F$, the set $\{x\in X : M(x)\subseteq F\}$ is closed in $X$; 
\end{enumerate}
\end{proposition}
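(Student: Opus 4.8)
The plan is to establish the chain of equivalences by proving $(1)\Leftrightarrow(2)$ directly from the definition of lower hemicontinuity and then $(2)\Leftrightarrow(3)$ by a purely set-theoretic complementation argument. Throughout, for an open set $U\subseteq Y$ I would abbreviate $A_U:=\{x\in X : M(x)\cap U\neq\emptyset\}$, so that assertion $(2)$ reads ``$A_U$ is open in $X$ for every open $U$''.

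For $(1)\Rightarrow(2)$, I would fix an open set $U$ and an arbitrary point $x\in A_U$. By definition $M(x)\cap U\neq\emptyset$, so lower hemicontinuity of $M$ at $x$ yields a neighborhood $V$ of $x$ in $X$ with $M(y)\cap U\neq\emptyset$ for all $y\in V$; that is, $V\subseteq A_U$. Since every point of $A_U$ is interior to $A_U$, the set $A_U$ is open. For the converse $(2)\Rightarrow(1)$, I would fix $x\in X$ and an open set $U$ with $M(x)\cap U\neq\emptyset$, so that $x\in A_U$. By $(2)$ the set $A_U$ is open and contains $x$, hence it is itself a neighborhood of $x$ in $X$; taking $V=A_U$ gives $M(y)\cap U\neq\emptyset$ for all $y\in V$, which is exactly lower hemicontinuity of $M$ at $x$. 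As $x$ was arbitrary, $M$ is lower hemicontinuous on $X$.

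For $(2)\Leftrightarrow(3)$, the key is the identity, valid for every subset $F\subseteq Y$,
\[
\{x\in X : M(x)\subseteq F\}=X\setminus\{x\in X : M(x)\cap(Y\setminus F)\neq\emptyset\}=X\setminus A_{Y\setminus F},
\]
which holds because $M(x)\subseteq F$ is equivalent to $M(x)\cap(Y\setminus F)=\emptyset$. When $F$ is closed, $U:=Y\setminus F$ is open, and $F\mapsto Y\setminus F$ is a bijection between the closed subsets of $Y$ and the open subsets of $Y$. Hence the family of sets $\{x : M(x)\subseteq F\}$ as $F$ ranges over closed sets coincides with the family of complements $X\setminus A_U$ as $U$ ranges over open sets. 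A set of the form $X\setminus A_U$ is closed in $X$ if and only if $A_U$ is open in $X$, so $(3)$ holds for all closed $F$ precisely when $(2)$ holds for all open $U$.

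There is no genuine analytic obstacle here; the argument is entirely formal. The only point requiring care, and the step I would state most explicitly, is bookkeeping of the two distinct complementations: the passage $F\mapsto Y\setminus F$ is taken inside the target space $Y$, whereas the passage to $X\setminus A_U$ is taken inside the source space $X$. Keeping these separate is what makes the duality between $(2)$ and $(3)$ transparent and rules out any confusion between openness of a set of parameters and openness of the corresponding subset of $Y$.
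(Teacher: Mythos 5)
Your proof is correct and takes essentially the same route as the paper: the paper treats $(1)\Leftrightarrow(2)$ as immediate from the definition and justifies $(2)\Leftrightarrow(3)$ by the single complementation identity $\{x\in X : M(x)\cap C^c\neq \emptyset\}^c=\{x\in X : M(x)\subseteq C\}$, which is exactly your argument written out in full detail.
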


\begin{proposition}[Upper Hemicontinuity]
\label{upper}
Let $M$ be a set-valued map from $X$ to $Y$  (both are subsets of some topological spaces). The following assertions are equivalent:
\begin{enumerate}
\item $M$ is upper hemicontinuous;
\item For all open sets $U$, the set $\{x\in X : M(x)\subseteq U\}$ is open in $X$;
\item For all closed sets $F$, the set $\{x\in X : M(x)\cap F\neq \emptyset\}$ is closed in $X$; 
\end{enumerate}
\end{proposition}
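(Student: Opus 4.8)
The plan is to establish the two equivalences separately: I would prove $(1)\Leftrightarrow(2)$ directly from the pointwise definition of upper hemicontinuity, and then $(2)\Leftrightarrow(3)$ purely by set-theoretic complementation, exactly mirroring the kind of argument one gives for Proposition~\ref{lower}.

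For $(1)\Rightarrow(2)$, I would fix an open set $U\subseteq Y$ and put $A:=\{x\in X : M(x)\subseteq U\}$. Taking any $x\in A$, upper hemicontinuity at $x$ supplies a neighborhood $V$ of $x$ in $X$ with $M(y)\subseteq U$ for every $y\in V$; this says exactly $V\subseteq A$, so every point of $A$ is interior and $A$ is open. For the converse $(2)\Rightarrow(1)$, fix $x\in X$ and an open $U$ with $M(x)\subseteq U$. Then $x$ belongs to the set $A$ above, which is open by $(2)$, so $A$ itself serves as the required neighborhood $V$ of $x$: every $y\in A$ satisfies $M(y)\subseteq U$, which is precisely upper hemicontinuity at $x$.

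For $(2)\Leftrightarrow(3)$, the key observation is that for any $S\subseteq Y$ and any open $U\subseteq Y$ one has $S\subseteq U$ if and only if $S\cap(Y\setminus U)=\emptyset$. Writing $F:=Y\setminus U$, which is closed, this yields
\[
\{x\in X : M(x)\subseteq U\}=\{x\in X : M(x)\cap F=\emptyset\}=X\setminus\{x\in X : M(x)\cap F\neq\emptyset\}.
\]
Hence the left-hand set is open in $X$ exactly when $\{x\in X : M(x)\cap F\neq\emptyset\}$ is closed in $X$. Since $U\mapsto Y\setminus U$ is a bijection between the open and the closed subsets of $Y$, quantifying over all open $U$ on one side amounts to quantifying over all closed $F$ on the other, which gives $(2)\Leftrightarrow(3)$.

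I do not anticipate a genuine obstacle; the argument is elementary. The only points demanding care are bookkeeping ones: the neighborhoods and the open or closed sets in conditions $(2)$ and $(3)$ must be taken in the relative topologies of $X$ and $Y$ respectively, and in the complementation step one must confirm that \emph{every} closed $F\subseteq Y$ arises as $Y\setminus U$ for some open $U$, so that no closed set is missed when passing between the two formulations.
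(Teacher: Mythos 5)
Your proof is correct and follows essentially the same route as the paper: the paper treats $(1)\Leftrightarrow(2)$ as immediate from the pointwise definition and justifies $(2)\Leftrightarrow(3)$ by the same complementation identity $\{x\in X : M(x)\cap C^c\neq \emptyset\}^c=\{x\in X : M(x)\subseteq C\}$ that you use. Your write-up is simply more detailed, in particular in noting that every closed $F\subseteq Y$ arises as the complement of an open set, which the paper leaves implicit.
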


In Propositions~\ref{lower} and~\ref{upper}, the two last equivalences come from the fact that $\{x\in X : M(x)\cap C^c\neq \emptyset\}^c=\{x\in X : M(x)\subseteq C\}$.

In~\cite{b073ecea-b76c-3262-ac64-b852e5e8abd0}, the authors define lower and upper semicontinuous at each point where the image of the set-valued mapping is a compact set. In this paper, we use the general version of those notions developed in~\cite[Chap. I]{thibault2023unilateral} (the interested reader can also consult~\cite[Chap. 6]{beer1993topologies} or~\cite[Chap. 16]{infinitedim}). 

As we are placed in finite dimensional spaces, closedness are completely determined by sequences. Using the closedness characterizations of hemicontinuity, we will need to connect $M(x_n)$ and $M(x)$ when $(x_n)_{n\in\nn}$ converges to $x$. One way to get this link is to use Painlevé-Kuratowski (denoted PK) limits of set sequences. We can reason with $\liminf$ and $\limsup$. Let us consider a sequence of sets $(C_n)_{n\in\nn}$ in $X\subseteq \rm$. The PK-$\liminf$ of $C_n$ is the set of the limits of convergent sequences $(x_n)$ such that $x_n\in C_n$ for all $n\in\nn$ whereas the PK-$\limsup$ of $C_n$ is the set of the limits of convergent subsequences $(x_\psi(n))$ for which $x_{\psi(n)}\in C_{\psi(n)}$ for all $n\in\nn$. The sequence $(C_n)_{n\in\nn}$ is said to have a PK-$\lim$ if its PK-$\liminf$ coincide with its PK-$\limsup$. In particular, an increasing (in the sense of the inclusion) sequence of sets has a PK-$\lim$ which consists in the closure of the union of sets. The PK-$\liminf$ and PK-$\limsup$ are used to define other continuity notions for set-valued mappings called inner and outer semicontinuity~\cite[Chap. I]{thibault2023unilateral}.

Now, we are ready to present the Berge's maximum theorem.

\begin{theorem}[Berge maximum theorem]
Let $T$ and $X$ be topological spaces. Let $f:T\times X\mapsto \br$ be an extended real-valued functuion and $M:T\rightrightarrows X$ a set-valued map. We define the marginal function $\Phi:T\mapsto \br$ for all $t\in T$ by:
\[
\Phi(t):=\sup_{x\in M(t)} f(t,x)
\]
Then:
\begin{enumerate}
\item If the set-valued map $M$ is lower semicontinuous at $\overline{t} \in T$ and if the function $f$ is lower semicontinuous on $\{\overline{t}\} \times M(\overline{t})$, then the function $\Phi$ is lower semicontinuous at $\overline{t}$.
\item If the set-valued map $M$ is upper semicontinuous at $\overline{t}$, if the function $f$ is upper semicontinuous at every point of $\{\overline{t}\} \times M(\overline{t})$, and if the set $M(\overline{t})$ is compact, then the function $\Phi$ is upper semicontinuous at $\overline{t}$.
\end{enumerate}
\end{theorem}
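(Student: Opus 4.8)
The plan is to prove the two assertions separately, in each case reducing the semicontinuity of $\Phi$ to a test against an arbitrary real threshold and then coupling the hemicontinuity of $M$ (which is what the statement calls semicontinuity of the set-valued map) with the corresponding semicontinuity of $f$ by means of a product neighborhood in $T\times X$. Throughout I will use the threshold form of semicontinuity: $f$ is lower semicontinuous at a point iff for every real $c$ strictly below its value there is a neighborhood of that point on which $f>c$, and dually for upper semicontinuity. This form is convenient because it treats the values $\pm\infty$ uniformly and because the product topology lets me take the neighborhoods of the form $V\times U$.

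For the first assertion (lower semicontinuity of $\Phi$ at $\overline t$), I would fix an arbitrary real $\lambda<\Phi(\overline t)$ and show that $\Phi>\lambda$ on a neighborhood of $\overline t$, which gives $\liminf_{t\to\overline t}\Phi(t)\geq\Phi(\overline t)$. Since $\Phi(\overline t)=\sup_{x\in M(\overline t)}f(\overline t,x)>\lambda$, there is a \emph{single} point $x_0\in M(\overline t)$ with $f(\overline t,x_0)>\lambda$. Lower semicontinuity of $f$ at $(\overline t,x_0)$ provides an open product neighborhood $V_0\times U_0$ of $(\overline t,x_0)$ on which $f>\lambda$. As $x_0\in M(\overline t)\cap U_0$, the set $M(\overline t)$ meets the open set $U_0$, so lower hemicontinuity of $M$ at $\overline t$ yields a neighborhood $V_1$ of $\overline t$ with $M(t)\cap U_0\neq\emptyset$ for all $t\in V_1$. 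On $V_0\cap V_1$ I pick any $x_t\in M(t)\cap U_0$; then $(t,x_t)\in V_0\times U_0$, whence $\Phi(t)\geq f(t,x_t)>\lambda$. The point is that a supremum only needs one good competitor, and lower hemicontinuity is exactly what keeps that competitor available near $\overline t$.

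For the second assertion (upper semicontinuity of $\Phi$ at $\overline t$), I would dually fix an arbitrary real $\lambda>\Phi(\overline t)$ and produce a neighborhood of $\overline t$ on which $\Phi\leq\lambda$, giving $\limsup_{t\to\overline t}\Phi(t)\leq\Phi(\overline t)$. Now $f(\overline t,x)\leq\Phi(\overline t)<\lambda$ for \emph{every} $x\in M(\overline t)$, so upper semicontinuity of $f$ at each such point furnishes an open product neighborhood $V_x\times U_x$ on which $f<\lambda$. The family $\{U_x\}_{x\in M(\overline t)}$ covers $M(\overline t)$; here I invoke compactness of $M(\overline t)$ to extract a finite subfamily $U_{x_1},\dots,U_{x_n}$, set $U:=\bigcup_i U_{x_i}\supseteq M(\overline t)$, and let $V':=\bigcap_i V_{x_i}$, which is still a neighborhood of $\overline t$ precisely because the index set is finite. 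Upper hemicontinuity of $M$ at $\overline t$, applied to the open set $U\supseteq M(\overline t)$, gives a neighborhood $V''$ with $M(t)\subseteq U$ for all $t\in V''$. On $V'\cap V''$ every $x\in M(t)$ lies in some $U_{x_i}$ while $t\in V_{x_i}$, so $f(t,x)<\lambda$; taking the supremum over $x\in M(t)$ yields $\Phi(t)\leq\lambda$.

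The main obstacle is concentrated in this second part: passing from pointwise control of $f$ (one product neighborhood per point of $M(\overline t)$) to a single neighborhood of $\overline t$ that works simultaneously for all $x\in M(t)$. This is impossible in general, and it is exactly the compactness of $M(\overline t)$ that rescues the argument, by turning an infinite intersection of the $V_x$ into a finite one; the upper hemicontinuity of $M$ is then what guarantees that the images $M(t)$ do not escape the finite union $U$ as $t$ varies near $\overline t$. The remaining issues—degenerate cases such as $M(\overline t)=\emptyset$, where $\Phi(\overline t)=-\infty$, and the bookkeeping for infinite values of $f$—are routine once the threshold formulation above is adopted.
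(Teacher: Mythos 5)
Your proof is correct. Note that the paper itself states Berge's maximum theorem without proof, simply importing it from Berge's book, so there is no in-paper argument to compare against; your two-part argument (a single good competitor plus lower hemicontinuity for the lower bound; a finite subcover of product neighborhoods plus upper hemicontinuity for the upper bound) is exactly the standard textbook proof of this result, and your identification of where compactness of $M(\overline{t})$ is indispensable is accurate. The degenerate cases you defer ($M(\overline{t})=\emptyset$, infinite values) are indeed routine under the threshold formulation you adopt.
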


\section{Positive Definiteness, Continuity and Level-Boundedness of Comparison Type Functions}
\label{results}
Hahn, in~\cite[p.98]{hahn1967stability}, proposed to construct, for a positive definite\footnote{(positive) for all $x\in\rd$, $f(x)\geq 0$ and (definite)$f(x)=0$ if and only if $x=0$}, level-bounded (the term {\it radially bounded} is more familiar in control/stability theory and this term was used by Hahn) and continuous function $f$, two functions defined for all $s\geq 0$:  
\begin{equation}
\label{hahndef}
\underline{\alpha}^f(s):=\inf_{s\leq \norm{y}} f(y) \text{ and } \overline{\alpha}^f(s):=\sup_{\norm{y}\leq s} f(y)
\end{equation}
to obtain bounds over $f$:
\[
\underline{\alpha}^f(\norm{x})\leq f(x)\leq \overline{\alpha}^f(\norm{x})
\]
Hahn and Khalil~\cite[A.5]{khalil2002nonlinear} propose unformal proofs about the continuity of $\underline{\alpha}^f$ and $\overline{\alpha}^f$. Simpler properties to prove are inherited from the properties of $f$. The functions $\underline{\alpha}^f$ and $\overline{\alpha}^f$ are positive definite and level-bounded. The fact that those function are increasing is also a straightforward property.

The goal of the paper is to generalize the construction replacing the norm by a general function $g:\rd\mapsto\rr$. Then, we study the monotonicity and the positive definiteness of the obtained functions. Finally, we study the continuity and the level-boundedness properties of the obtained functions with respect to the relative properties of $f$ and $g$.

\subsection{Definitions and Useful Facts}

Let $f:\rd\mapsto \rr$ and $g:\rd\mapsto \rr$. We define the function $\supfg{f}{g}:\rr\mapsto \br$ as follows:
\begin{equation}
\label{supfgdef}
\supfg{f}{g}:s\mapsto \sup_ {g(x)\leq s} f(x)
\end{equation}
Note that we have those particular situations:
\begin{itemize}
\item If $s<\inf_{\rd} g$ then $\supfg{f}{g}(s)=-\infty$ as the supremum over the empty set is equal to $-\infty$.
\item If $s=\inf_{\rd} g$ then $\supfg{f}{g}(s)=\sup_{\agmn(g)} f$ and this optimal value is again equal to $-\infty$  if and if only $\agmn(g)$ is empty. 
\item The supremum in the definition of $\supfg{f}{g}$ is taken over a nonempty set when $s>\inf_{\rd} g$ and the supremum is actually taken over $\rd$ when $s\geq \sup_{\rd} g$. 
\end{itemize} 
Dually, we can define the function $\inffg{f}{g}:\rr\mapsto \br$ as follows:
\begin{equation}
\label{inffgdef}
\inffg{f}{g}:s\mapsto \inf_ {s\leq g(x)} f(x)
\end{equation}
Note that, some results obtained for $\supfg{f}{g}$ can be easily extended to $\inffg{f}{g}$ using the relation:
\begin{equation}
\label{infsuprel}
\inffg{f}{g}=\inf_{s\leq g(x)} f(x)=-\sup_{-g(x)\leq -s} -f(x)=-(\supfg{(-f)}{(-g)}\circ (x\mapsto -x))
\end{equation}   
Similarly to $\supfg{f}{g}$, $\inffg{f}{g}$ is known in particular situations. If $s$ is greater than $\inf_{\rd} g$, then $\inffg{f}{g}(s)=\inf_{\rd} f$. If $s$ is strictly greater than $\sup_{\rd} g$, $\inffg{f}{g}(s)$ is taken over a nonempty set. If $s$ is equal to $\sup_{\rd} g$, $\inffg{f}{g}(s)$ is equal to $\inf_{\agmx(g)} f$ and thus is equal to $+\infty$ if and only if $\agmx(g)$ is empty. Finally, $\inffg{f}{g}(s)$ is equal to $+\infty$ for all $s$ strictly greater than $\sup_{\rd} g$.
\subsection{Monotonicity and Positive Definiteness}
We present two algebraic properties : the increasing property and the positive definiteness property.
\subsubsection{Monotonicity}
The monotonicity of $\supfg{f}{g}$ comes from the fact that $\rd \supset C:\mapsto \sup_C f$ is an increasing function of $C$ with respect to the set inclusion order. The monotonicity of $\inffg{f}{g}$ can be deduced either from the fact that $\rd \supset C:\mapsto \inf_C f$ is an decreasing function of $C$ with respect to the set inclusion order or from the
composition relation presented at Eq.~\eqref{infsuprel}. We present those results into Prop.~\ref{monotony}.

\begin{proposition}[Monotonicity]
\label{monotony}
The functions $\inffg{f}{g}$ and $\supfg{f}{g}$ are increasing.
\end{proposition}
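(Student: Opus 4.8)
The plan is to deduce the monotonicity of both value functions from a single, purely order-theoretic observation: the map $C\mapsto \sup_C f$ is nondecreasing and the map $C\mapsto \inf_C f$ is nonincreasing with respect to set inclusion, so it suffices to control how the feasible sets $[g\leq s]$ and $[s\leq g]$ move as the parameter $s$ increases. Everything then reduces to the trivial inclusions between these level sets.

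First I would treat $\supfg{f}{g}$. Fix $s\leq t$ in $\rr$. The key inclusion is $[g\leq s]\subseteq [g\leq t]$, since any $x$ with $g(x)\leq s$ automatically satisfies $g(x)\leq t$. As enlarging the set over which a supremum is taken can only increase it, we get $\sup_{[g\leq s]} f\leq \sup_{[g\leq t]} f$, which is exactly $\supfg{f}{g}(s)\leq \supfg{f}{g}(t)$. Since $s\leq t$ were arbitrary, $\supfg{f}{g}$ is increasing.

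For $\inffg{f}{g}$ two routes are available. The direct route mirrors the above with the reversed inclusion $[t\leq g]\subseteq [s\leq g]$ (valid because $t\leq g(x)$ forces $s\leq t\leq g(x)$) together with the fact that shrinking the feasible set can only increase an infimum; hence $\inffg{f}{g}(s)=\inf_{[s\leq g]} f\leq \inf_{[t\leq g]} f=\inffg{f}{g}(t)$. Alternatively, one invokes the composition identity~\eqref{infsuprel}: the function $\supfg{(-f)}{(-g)}$ is increasing by the previous paragraph, the inner map $s\mapsto -s$ is decreasing, and the outer sign change is decreasing, so the composite $\inffg{f}{g}$ is increasing.

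The only point requiring care --- and the sole place where the argument is more than a one-line inclusion --- is the behaviour at parameters for which the feasible set is empty. For $\supfg{f}{g}$ an empty sublevel yields the value $-\infty$ by the stated convention, and since $-\infty$ is the least element of $\br$, the inequality $\supfg{f}{g}(s)\leq \supfg{f}{g}(t)$ holds automatically whenever $[g\leq s]=\emptyset$; dually, an empty superlevel gives $\inffg{f}{g}$ the value $+\infty$, the greatest element of $\br$, which sits consistently at the top and cannot violate monotonicity. Once these conventions are checked to be compatible with the inclusions above, there is no further obstacle, and the order-monotonicity of $C\mapsto \sup_C f$ and $C\mapsto \inf_C f$ closes both cases.
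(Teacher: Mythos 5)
Your proof is correct and follows exactly the route the paper itself indicates (the remark preceding Prop.~\ref{monotony}): the inclusions $[g\leq s]\subseteq[g\leq t]$ and $[t\leq g]\subseteq[s\leq g]$ combined with the monotonicity of $C\mapsto\sup_C f$ and antitonicity of $C\mapsto\inf_C f$ under set inclusion, with the composition identity~\eqref{infsuprel} as an alternative for $\inffg{f}{g}$. Your explicit check of the empty-feasible-set conventions is a welcome addition the paper leaves implicit.
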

\subsubsection{Positive Definiteness}
First, we can see that $\inffg{f}{g}$ and $\supfg{f}{g}$ cannot be positive definite. This is implied by the fact that those functions are increasing. If  $\inffg{f}{g}(0)=0$ (resp. $\supfg{f}{g}(0)=0$) then for all $s<0$, $\inffg{f}{g}(s)\leq 0$ (resp. $\supfg{f}{g}(s)\leq 0$). Due to Prop.~\ref{monotony}, we can only expect that :
\begin{equation}
\label{infdef}
\inffg{f}{g}(0)=0 \text{ and } \inffg{f}{g}(s)>0 \text{ for all } s>0 
\end{equation}
\begin{equation}
\label{supdef}
\supfg{f}{g}(0)=0 \text{ and } \supfg{f}{g}(s)> 0 \text{ for all }s>0
\end{equation}
Conditions~\eqref{infdef} and~\eqref{supdef} say respectivelly that $\inffg{f}{g}$ and $\supfg{f}{g}$ are positive definite on the nonnegative reals.

For the function $\supfg{f}{g}$, to obtain Condition~\eqref{supdef} can be easily reformulated algebraically.
\begin{proposition}[Positive definiteness of $\supfg{f}{g}$ ]
\label{positivedefsup}
Condition~\eqref{supdef} holds if and only if the three following conditions hold:
\begin{enumerate}
\item For all strictly positive real $s$, $[f>0]\cap [g\leq s]\neq \emptyset$;
\item The set $[g\leq 0]$ is nonempty and included in $[f\leq 0]$;
\item There exists a sequence $(x_n)_{n\in\nn}\in [g\leq 0]$ such that $\lim_{n\to +\infty} f(x_n)=0$. 
\end{enumerate}
\end{proposition}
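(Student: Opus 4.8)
The plan is to decouple Condition~\eqref{supdef} into its two separate requirements and to match each one to the stated conditions. Writing $\supfg{f}{g}(s)=\sup_{x\in[g\leq s]} f(x)$, Condition~\eqref{supdef} asks for two things at once: the strict positivity $\supfg{f}{g}(s)>0$ for every $s>0$, and the precise value $\supfg{f}{g}(0)=0$. I would treat these independently and then recombine.

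First I would dispatch the positivity part. For any fixed $s$, the supremum $\sup_{x\in[g\leq s]}f(x)$ is strictly positive if and only if the index set contains a point at which $f$ is strictly positive: if some $x\in[g\leq s]$ satisfies $f(x)>0$, then the supremum is at least $f(x)>0$; while if no such point exists (in particular if $[g\leq s]=\emptyset$, giving the value $-\infty$), then $f\leq 0$ on $[g\leq s]$ and the supremum is $\leq 0$. Hence $\supfg{f}{g}(s)>0$ is exactly the statement $[f>0]\cap[g\leq s]\neq\emptyset$, and quantifying over $s>0$ yields Condition~(1).

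Second I would characterise the equality $\supfg{f}{g}(0)=\sup_{x\in[g\leq 0]}f(x)=0$. The value $0$ arises as this supremum precisely when $0$ is an upper bound of $f$ on $[g\leq 0]$ and is approached by values of $f$ there. The upper bound property reads $f(x)\leq 0$ for all $x\in[g\leq 0]$, that is $[g\leq 0]\subseteq[f\leq 0]$; together with the nonemptiness of $[g\leq 0]$, which is needed to rule out the degenerate value $-\infty$, this is exactly Condition~(2). That $0$ is approached means there exist $x_n\in[g\leq 0]$ with $f(x_n)\to 0$, which is Condition~(3). I would then verify both implications: if $\supfg{f}{g}(0)=0$, the finiteness forces $[g\leq 0]\neq\emptyset$, the supremum being an upper bound forces $[g\leq 0]\subseteq[f\leq 0]$, and the least-upper-bound property produces the sequence of Condition~(3); conversely, Condition~(2) gives $\sup_{x\in[g\leq 0]}f(x)\leq 0$ over a nonempty set, while Condition~(3), since $f(x_n)\leq 0$ and $f(x_n)\to 0$, forces $\sup_{x\in[g\leq 0]}f(x)\geq 0$, so the two bounds pin the value to $0$.

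Combining the two parts, Condition~\eqref{supdef} holds if and only if Conditions~(1)--(3) hold. The only genuine subtlety, rather than a real obstacle, lies in the second part: one must resist writing ``there exists a maximiser $x$ with $f(x)=0$'' and instead phrase the requirement with a sequence, because the supremum defining $\supfg{f}{g}(0)$ need not be attained on $[g\leq 0]$. This is exactly why Condition~(3) is stated with a limit rather than with a single point, and it is the one place where care is required.
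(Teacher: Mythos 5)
Your proof is correct and takes the same approach as the paper, which simply asserts that the proposition is ``an equivalent reformulation of Condition~\eqref{supdef}''; you have supplied the details (splitting the condition into the $s>0$ part and the $s=0$ part, and unwinding each supremum) that the paper leaves implicit. Your closing remark about why Condition~(3) must be phrased with a sequence rather than a maximiser is exactly the right point of care.
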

\begin{proof}
This is just an equivalent reformulation of Condition~\eqref{supdef}.
\end{proof}
When $f$ is positive definite, to check Condition~\eqref{supdef} becomes even easier.
\begin{proposition}[Conditions when $f$ positive definite] 
If $f$ is positive definite on $\rd$, Condition~\eqref{supdef} is equivalent to for all $s>0$, the set $[g\leq s]$ is not reduced to $\{0\}$ and $[g\leq 0]=\{0\}$.
\end{proposition}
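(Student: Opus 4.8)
The plan is to treat this as a direct specialisation of the three-condition characterisation in Proposition~\ref{positivedefsup}, simplifying each of its three clauses under the hypothesis that $f$ is positive definite. The only two consequences of positive definiteness I would extract are the set identities $[f\leq 0]=\{0\}$ and $[f>0]=\rd\setminus\{0\}$, both immediate from $f\geq 0$ everywhere together with $f(x)=0$ precisely at $x=0$. Everything else is bookkeeping, so I do not anticipate a genuine obstacle; the work is in checking that each clause collapses to the announced form.

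First I would dispatch the second clause of Proposition~\ref{positivedefsup}. Substituting $[f\leq 0]=\{0\}$, it reads ``$[g\leq 0]$ is nonempty and contained in $\{0\}$'', which is exactly $[g\leq 0]=\{0\}$, the second half of the claimed equivalence. Next I would show that, once $[g\leq 0]=\{0\}$, the third clause is automatic: the only sequence contained in $[g\leq 0]=\{0\}$ is the constant sequence $x_n=0$, and $f(0)=0$ by positive definiteness, so $\lim_{n\to+\infty}f(x_n)=0$ holds trivially. Thus clause three carries no additional information.

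It then remains to rewrite the first clause. Using $[f>0]=\rd\setminus\{0\}$, the requirement $[f>0]\cap[g\leq s]\neq\emptyset$ says that for every $s>0$ the sublevel $[g\leq s]$ contains a nonzero point. Here I would invoke the elementary monotonicity of sublevels in the level, namely $[g\leq 0]\subseteq[g\leq s]$ for $s>0$: granted $[g\leq 0]=\{0\}$, we already have $0\in[g\leq s]$, so $[g\leq s]$ is nonempty and ``contains a nonzero point'' is equivalent to ``$[g\leq s]\neq\{0\}$'', i.e. to $[g\leq s]$ not being reduced to $\{0\}$. Assembling the two directions is then routine: from Condition~\eqref{supdef} I read off clauses 1--3 via Proposition~\ref{positivedefsup}, obtain $[g\leq 0]=\{0\}$ from clause two and the non-triviality of each $[g\leq s]$ from clause one; conversely, the two displayed conditions yield clause two directly, clause three by the constant-sequence remark, and clause one because the guaranteed nonzero element of $[g\leq s]$ lies in $[f>0]$, so Proposition~\ref{positivedefsup} returns Condition~\eqref{supdef}.

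The single point requiring care is the reading of ``not reduced to $\{0\}$'', which could a priori mean either ``$\neq\{0\}$'' (allowing the empty set) or ``strictly contains $0$''. The conjunction with $[g\leq 0]=\{0\}$ forces $0\in[g\leq s]$ for every $s>0$, so the two readings coincide and the argument goes through unambiguously; I would make this dependence explicit rather than leave it implicit, since it is exactly what ties the first clause to the phrasing of the statement.
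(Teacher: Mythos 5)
Your proposal is correct and follows essentially the same route as the paper's own proof: specialise the three clauses of Proposition~\ref{positivedefsup} using $[f\leq 0]=\{0\}$ and $[f>0]=\rd\setminus\{0\}$, note that the zero (constant) sequence discharges the third clause, and read the first clause as ``$[g\leq s]$ contains a nonzero vector.'' Your extra remark that $[g\leq 0]=\{0\}$ forces $0\in[g\leq s]$, so that ``contains a nonzero point'' and ``not reduced to $\{0\}$'' coincide, is a small but welcome clarification the paper leaves implicit.
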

\begin{proof}
If $f$ is positive definite then $[f>0]=\rd\backslash\{0\}$ and then the assertion $[f >0]\cap [g\leq s]\neq \emptyset$ for all $s>0$ becomes $[g\leq s]$ contains a nonzero vector. The set $[f\leq 0]=\{0\}$ and then $\emptyset \neq [g\leq 0]\subseteq [f\leq 0]$ becomes $[g\leq 0]=\{0\}$. Finally, the null sequence satisfies the third statement of Prop.~\ref{positivedefsup}.
\end{proof}
\begin{corollary}
If $f$ is positive definite then $\overline{\alpha}^f$ is positive definite.
\end{corollary}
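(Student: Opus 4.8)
The plan is to recognize this as an immediate specialization of the preceding proposition. The Hahn function $\overline{\alpha}^f$ from Eq.~\eqref{hahndef} is exactly $\supfg{f}{g}$ with the choice $g=\norm{\cdot}$, since $\overline{\alpha}^f(s)=\sup_{\norm{y}\leq s} f(y)=\sup_{g(y)\leq s} f(y)$. Here ``positive definite'' must be understood in the sense of Condition~\eqref{supdef} (positive definiteness on the nonnegative reals), in accordance with the earlier discussion showing that no increasing function can be positive definite on all of $\rr$.

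First I would invoke the proposition ``Conditions when $f$ positive definite'', which reduces Condition~\eqref{supdef}, under the hypothesis that $f$ is positive definite on $\rd$, to the two geometric requirements: for every $s>0$ the sublevel $[g\leq s]$ is not reduced to $\{0\}$, and $[g\leq 0]=\{0\}$. So the entire task becomes verifying these two requirements for the particular constraint function $g=\norm{\cdot}$.

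Next I would check the two conditions directly from the norm axioms. For $s>0$, the sublevel $[\norm{\cdot}\leq s]$ is the closed ball of radius $s$ centered at the origin; since $s>0$ this ball contains vectors of norm $s>0$, hence nonzero vectors, so it is not reduced to $\{0\}$. For the zero level, $[\norm{\cdot}\leq 0]=\{x\in\rd : \norm{x}\leq 0\}=\{x\in\rd : \norm{x}=0\}=\{0\}$, using the defining property that a norm vanishes only at the origin. Both requirements hold.

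I expect no real obstacle here, as the corollary is a direct instantiation rather than a new argument; the only point demanding a little care is the correct reading of ``positive definite'' as positive definiteness on the nonnegative reals, and then simply confirming that the norm satisfies the hypotheses of the prior proposition. Concluding, Condition~\eqref{supdef} holds, so $\overline{\alpha}^f$ is positive definite.
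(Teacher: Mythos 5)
Your proposal is correct and matches the paper's (implicit) argument: the corollary is indeed the specialization of the preceding proposition to $g=\norm{\cdot}$, and your verification that $[\norm{\cdot}\leq s]$ contains nonzero vectors for $s>0$ while $[\norm{\cdot}\leq 0]=\{0\}$ is exactly what is needed. Your remark that ``positive definite'' here means Condition~\eqref{supdef} (positive definiteness on the nonnegative reals) is also the correct reading.
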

We can reformulate Condition~\eqref{infdef} easily similarly to Prop~\eqref{positivedefsup}.
\begin{proposition}[Positive definite of $\inffg{f}{g}$ - 1]
\label{positivedefinf}
Condition~\eqref{infdef} holds if and only if the three following statement hold:
\begin{enumerate}
\item For all $s>0$ there exists $b_s>0$ such that $[g\geq s]\subseteq [f\geq b_s]$;
\item The set $[g\geq 0]$ is nonempty and included in $[f\geq 0]$;
\item There exists a sequence $(x_n)_{n\in\nn}\in [g\geq 0]$ such that $\lim_{n\to +\infty} f(x_n)=0$.
\end{enumerate}
\end{proposition}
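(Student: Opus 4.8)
The plan is to prove the statement exactly as its companion Prop.~\ref{positivedefsup} was handled: Condition~\eqref{infdef} is a conjunction of two clauses, namely $\inffg{f}{g}(0)=0$ and $\inffg{f}{g}(s)>0$ for every $s>0$, and I would reformulate each clause separately by unfolding the definition $\inffg{f}{g}(s)=\inf_{[g\geq s]}f$. First I would treat the strict-positivity clause. For a fixed $s>0$, I claim that $\inf_{[g\geq s]}f>0$ holds if and only if there is some $b_s>0$ with $[g\geq s]\subseteq[f\geq b_s]$. The backward direction is immediate: such an inclusion means $f(x)\geq b_s$ on $[g\geq s]$, so $\inffg{f}{g}(s)\geq b_s>0$. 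For the forward direction I would use that an infimum is itself a lower bound: if $m:=\inf_{[g\geq s]}f>0$, then $f(x)\geq m$ on the whole set, so $b_s:=m$ works, the case $m=+\infty$ being covered by any positive $b_s$. The empty-set case is consistent with the statement, since $[g\geq s]=\emptyset$ gives $\inffg{f}{g}(s)=+\infty>0$ while $\emptyset\subseteq[f\geq b_s]$ holds trivially; this is precisely what makes condition 1 the correct reformulation.

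Next I would treat the clause $\inffg{f}{g}(0)=0$, that is $\inf_{[g\geq 0]}f=0$. I would split ``the infimum equals $0$'' into the familiar three halves: the set must be nonempty, otherwise the infimum is $+\infty$; the value $0$ must be a lower bound; and it must be approached. Nonemptiness together with the lower-bound half, namely $f(x)\geq 0$ for all $x\in[g\geq 0]$, i.e.\ $[g\geq 0]\subseteq[f\geq 0]$, is exactly condition 2, and it yields $\inf_{[g\geq 0]}f\geq 0$. That the infimum is not strictly positive but equal to $0$ is equivalent to the existence of a sequence $(x_n)_{n\in\nn}\subseteq[g\geq 0]$ with $f(x_n)\to 0$, which is condition 3 and gives $\inf_{[g\geq 0]}f\leq 0$; the two inequalities then force equality. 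Running each of these equivalences in both directions establishes that Condition~\eqref{infdef} holds exactly when conditions 1--3 hold.

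The main thing to get right, rather than a genuine obstacle, is the bookkeeping around degenerate cases: empty superlevel sets and infinite infima. The one point that deserves a moment's care is the equivalence ``$\inf>0$ iff a uniform positive lower bound exists'', which is where the auxiliary constant $b_s$ of condition 1 originates; in contrast with the $s=0$ case I deliberately do not ask for the infimum to be attained or even approached here, only to be bounded away from $0$. I would also remark that one could instead try to derive the result from the duality relation~\eqref{infsuprel} together with Prop.~\ref{positivedefsup}, but since~\eqref{infsuprel} sends the nonnegative arguments of $\inffg{f}{g}$ to the nonpositive arguments of $\supfg{(-f)}{(-g)}$ whereas Prop.~\ref{positivedefsup} concerns nonnegative arguments, that route would first require a separate sign-reversed analogue of Prop.~\ref{positivedefsup}. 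The direct reformulation above is therefore the cleaner path, and it mirrors the one-line justification already used for Prop.~\ref{positivedefsup}.
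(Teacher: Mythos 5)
Your proof is correct and is exactly the argument the paper intends: the paper states this proposition without a separate proof, remarking only that it is an equivalent reformulation of Condition~\eqref{infdef} in the spirit of Prop.~\ref{positivedefsup}, and your clause-by-clause unfolding of $\inffg{f}{g}(s)=\inf_{[g\geq s]}f$ (including the empty-set and $+\infty$ bookkeeping) is precisely that reformulation made explicit. Your closing remark about why the duality relation~\eqref{infsuprel} does not directly transfer Prop.~\ref{positivedefsup} is also accurate.
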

\begin{example}
\label{exmupper}
Let us define $f:\rr\mapsto \rr$ as follows : $f(x)=1$ if $x\leq 0$ and $f(x)=x^2$ for all $x>0$. We suppose that $g$ is the identity on $\rr$. 
Then :
\[
\supfg{f}{g}(s)=\left\{
\begin{array}{lr} 
1 &  \text{ if } s\leq 1\\
s^2 & \text{ if } s>1\\
\end{array}
\right.
\quad \text{ and }\quad 
\inffg{f}{g}(s)=\left\{
\begin{array}{lr} 
0 &  \text{ if } s\leq 0\\
s^2 & \text{ if } s>0\\
\end{array}
\right.
\] 
The function $f$ is strictly positive and then $[f\leq 0]$ is empty and the second statement of Prop~\ref{positivedefsup} does not hold and thus $\supfg{f}{g}$ cannot be positive definite on $\rr_+$. However, $[g\geq 0]$ is nonempty and $[f\geq 0]=\rd$. Hence the second statement of Prop~\ref{positivedefinf} holds. The third statement holds by continuity at right at 0 and $f(0^+)=0$. The first statement holds as for all $s>0$, $f$ is lower bounded by $s^2$.          
\qed        
\end{example}

Contrary to $\supfg{f}{g}$, the fact that $f$ is positive definite does not bring anything except that we can remove the second statement of Prop.~\ref{positivedefinf}. The two other assertions cannot be reformulated. To obtain simpler results to check whether $\inffg{f}{g}$ satisfies Condition~\eqref{infdef}, we add semicontinuity assumptions on $f$ and $g$.

\begin{proposition}[Positive definiteness of $\inffg{f}{g}$ - 2]
\label{positivedef}
If the following conditions hold:
\begin{enumerate}
\item The function $f$ is lower semicontinuous and nonnegative;
\item The function $g$ is upper semicontinuous with $\inf_{\rd} g\leq 0$;
\item For all $s\geq 0$, $\displaystyle{\inffg{f}{g}(s)=\inf_{[s\leq g]\cap C_s} f}$ for some nonempty compact set $C_s$;
\item $\emptyset\neq\{x\in\rd : f(x)=0\}\subseteq \{x\in\rd : g(x)=0\}$.
\end{enumerate}
Then $\inffg{f}{g}$ satisfies Condition~\eqref{infdef}.
\end{proposition}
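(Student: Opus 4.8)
The plan is to establish the two halves of Condition~\eqref{infdef} separately: computing the value $\inffg{f}{g}(0)$ directly from nonnegativity and the zero-set inclusion, and proving strict positivity at each $s>0$ through a Weierstrass-type attainment argument made possible by the compactness hypothesis.

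First I would treat the value at $0$. Since $f$ is nonnegative (hypothesis~1), every feasible value in $\inffg{f}{g}(0)=\inf_{0\le g(x)}f(x)$ is $\geq 0$, so $\inffg{f}{g}(0)\ge 0$. For the reverse inequality I would use hypothesis~4: the set $\{x\in\rd:f(x)=0\}$ is nonempty, and any of its points $x_0$ also satisfies $g(x_0)=0$, hence $x_0$ is feasible with $f(x_0)=0$, giving $\inffg{f}{g}(0)\le 0$. Combining the two yields $\inffg{f}{g}(0)=0$. (Incidentally, the existence of such an $x_0$ forces $\inf_{\rd}g\le g(x_0)=0$, which is consistent with, and makes redundant, the inequality $\inf_{\rd}g\le 0$ of hypothesis~2.)

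Next, fixing $s>0$, I would argue $\inffg{f}{g}(s)>0$ as follows. If $[s\le g]=\emptyset$ then the infimum is taken over the empty set and $\inffg{f}{g}(s)=+\infty>0$, so I may assume $[s\le g]\ne\emptyset$. By hypothesis~3 one has $\inffg{f}{g}(s)=\inf_{[s\le g]\cap C_s}f$ for a nonempty compact set $C_s$. Since $g$ is upper semicontinuous (hypothesis~2), the superlevel $[s\le g]$ is closed, so the set $K_s:=[s\le g]\cap C_s$ is a closed subset of the compact $C_s$ and is therefore compact; when $K_s=\emptyset$ the value is again $+\infty>0$. Otherwise $f$, being lower semicontinuous (hypothesis~1) and real-valued on the nonempty compact set $K_s$, attains its minimum at some $x^\ast\in K_s$, so that $\inffg{f}{g}(s)=f(x^\ast)$. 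To finish I would rule out $f(x^\ast)=0$: if it held, then by hypothesis~4 we would get $x^\ast\in\{x:f(x)=0\}\subseteq\{x:g(x)=0\}$, hence $g(x^\ast)=0$, contradicting $x^\ast\in[s\le g]$ together with $s>0$. Thus $f(x^\ast)>0$ and $\inffg{f}{g}(s)>0$, establishing Condition~\eqref{infdef}.

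The crux of the argument is the attainment step. The full superlevel $[s\le g]$ may be unbounded, so the infimum need not be achieved on it, and the entire role of hypothesis~3 is to confine the optimization to a compact set on which lower semicontinuity of $f$ delivers a genuine minimizer; once that minimizer exists, hypothesis~4 finishes the proof almost immediately. The only point demanding care is the bookkeeping of the degenerate cases (empty superlevel, or $K_s=\emptyset$), where $\inffg{f}{g}(s)=+\infty$ and the required inequality holds vacuously.
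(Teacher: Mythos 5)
Your proof is correct and follows essentially the same route as the paper's: attain the infimum on the compact set $[s\leq g]\cap C_s$ via lower semicontinuity, use nonnegativity of $f$, and invoke the inclusion $\{f=0\}\subseteq\{g=0\}$ to rule out a zero minimum when $s>0$. In fact your write-up is more complete than the paper's very terse argument, since you explicitly verify $\inffg{f}{g}(0)=0$ from hypothesis~4 and handle the degenerate cases where $[s\leq g]$ or $[s\leq g]\cap C_s$ is empty.
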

\begin{proof}
As $f$ is lower semicontinuous and $[g\geq s]\cap C_s$ is nonempty and compact there exists $x\in[g\geq s]\cap C_s$
such that $f(x_s)=\inffg{f}{g}(s)$. As $f$ is nonnegative $f(x_s)\geq 0$. If $f(x_s)=0$ then we have $g(x_s)=0$ which concludes the proof.
\end{proof}
\begin{proposition}
Suppose that $f$ is lower semicontinuous, positive definite and level-bounded  and  $g$ is upper semicontinuous and positive definite. Then Condition~\eqref{infdef} holds.
\end{proposition}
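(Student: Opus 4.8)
The plan is to derive the statement from Proposition~\ref{positivedef} by verifying its four hypotheses; the only one demanding genuine work is the compactness reduction in its third item, and that is precisely where the level-boundedness of $f$ is used.

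First I would dispatch the three easy hypotheses. Since $f$ is positive definite it is in particular nonnegative, so together with the assumed lower semicontinuity the first hypothesis holds. Since $g$ is positive definite we have $g\geq 0$ on $\rd$ and $g(0)=0$, whence $\inf_{\rd}g=0\leq 0$, which gives the second hypothesis ($g$ being upper semicontinuous by assumption). For the fourth hypothesis, positive definiteness of $f$ and of $g$ yields $\{x\in\rd:f(x)=0\}=\{0\}=\{x\in\rd:g(x)=0\}$, so $\{f=0\}$ is nonempty and (trivially) contained in $\{g=0\}$.

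The core of the argument is the third hypothesis: for every $s\geq 0$ I must exhibit a nonempty compact set $C_s$ with $\inffg{f}{g}(s)=\inf_{[s\le g]\cap C_s}f$. If $[s\le g]=\emptyset$ then $\inffg{f}{g}(s)=+\infty$, and I may simply take $C_s=\{0\}$, for which $[s\le g]\cap C_s=\emptyset$ and the infimum is again $+\infty$. Otherwise I would fix any $x_0\in[s\le g]$, set $M:=f(x_0)$, and take $C_s:=[f\leq M]$. This set is closed because $f$ is lower semicontinuous and bounded because $f$ is level-bounded, hence compact; it is nonempty since $f(0)=0\leq M$ places $0$ in it. Because every point of $[s\le g]$ whose $f$-value exceeds $M$ is dominated by the competitor $x_0$, discarding such points cannot change the infimum, so $\inffg{f}{g}(s)=\inf_{[s\le g]}f=\inf_{[s\le g]\cap C_s}f$, as required.

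With all four hypotheses verified, Proposition~\ref{positivedef} delivers Condition~\eqref{infdef}. I expect the third hypothesis to be the main obstacle: one has to convert the possibly unbounded constraint set $[s\le g]$ into a compact minimization domain, and the decisive point is that level-boundedness of $f$ forces every sublevel $[f\leq M]$ to be bounded, while upper semicontinuity of $g$ (closedness of $[s\le g]$) and lower semicontinuity of $f$ (closedness of $[f\leq M]$ and attainment of the minimum) supply the closedness needed for compactness.
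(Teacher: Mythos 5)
Your proof is correct and follows essentially the same route as the paper: verify the four hypotheses of Proposition~\ref{positivedef}, with the key step being the choice $C_s=[f\leq f(x_0)]$ for some $x_0\in[s\leq g]$, compact by lower semicontinuity and level-boundedness of $f$. You are in fact slightly more careful than the paper, which asserts without justification that $[s\leq g]$ is nonempty for every $s\geq 0$ (false if $g$ is bounded above), whereas you handle that degenerate case explicitly.
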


\begin{proof}
The first statement of Prop.~\ref{positivedef} holds by assumption on $f$. As $g$ is positive definite $g(0)=0$ and $g(x)$ is strictly positive for all $x\neq 0$. Then $\inf_{\rd} g=g(0)=0$ and the second statement of Prop.~\ref{positivedef} holds. Let $s\geq 0$, then $[g\geq s]$ is nonempty and let us pick $y \in [s\leq g]$. We have $\inffg{f}{g}(s)=\inf_{[g\geq s]\cap [f\leq f(y)]} f$ with ${[g\geq s]\cap [f\leq f(y)]}\neq \emptyset$. The set $[f\leq f(y)]$ is compact as $f$ is lower semicontinuous and level-bounded which proves that the third statement of Prop.~\ref{positivedef} holds. Finally, as $f$ and $g$ are definite then $\{x\in\rd : f(x)=0\}=\{0\}=\{x\in\rd : g(x)=0\}$ which validates the last statement of Prop.~\ref{positivedef} and ends the proof.
\end{proof}

\begin{corollary}
For all lower semicontinuous positive definite function $f$, the function $\underline{\alpha}^f$ satisfies Condition~\eqref{infdef}. 
\end{corollary}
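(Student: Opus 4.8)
The plan is to derive the corollary as the instance $g=\norm{\cdot}$ of the two preceding positive‑definiteness results, since by definition $\underline{\alpha}^f=\inffg{f}{g}$ when $g$ is the norm. First I would collect the elementary facts about this particular $g$: the norm is continuous on $\rd$, hence in particular upper semicontinuous, it is positive definite (nonnegative and vanishing only at the origin), and $\inf_{\rd}g=g(0)=0$. These are exactly the requirements imposed on the constraint function in Prop.~\ref{positivedef}, so the only remaining work is to match the hypotheses bearing on $f$ and to check the three conditions of Prop.~\ref{positivedefinf}.

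Next I would dispose of the easy half of Condition~\eqref{infdef} together with the routine conditions of Prop.~\ref{positivedefinf}. For $s=0$ the feasible set is $[0\leq g]=\rd$, whence $\underline{\alpha}^f(0)=\inf_{\rd}f=f(0)=0$ because $f$ is nonnegative and vanishes at the origin; this is the first part of Condition~\eqref{infdef}. In the language of Prop.~\ref{positivedefinf}, its second condition holds since $[g\geq 0]=\rd\subseteq[f\geq 0]$, and its third condition holds for the constant null sequence, for which $g(x_n)=0$ and $f(x_n)=0\to 0$.

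The whole weight of the argument then falls on the first condition of Prop.~\ref{positivedefinf}: for each $s>0$ I must produce $b_s>0$ with $[g\geq s]\subseteq[f\geq b_s]$, that is $\underline{\alpha}^f(s)=\inf_{\norm{y}\geq s}f(y)>0$. The route I would take is to verify the third hypothesis of Prop.~\ref{positivedef} for $g=\norm{\cdot}$ and then run its proof verbatim: fixing $s>0$, pick any feasible $y_0$ (the exterior $[g\geq s]$ is nonempty), set $C_s=[f\leq f(y_0)]$, and note that discarding the points where $f>f(y_0)$ leaves the infimum unchanged, so that $\underline{\alpha}^f(s)=\inf_{[g\geq s]\cap C_s}f$. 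If this $C_s$ is compact, then $[g\geq s]\cap C_s$ is a nonempty compact set, the lower semicontinuity of $f$ turns the infimum into a minimum attained at some $x_s$ with $\norm{x_s}\geq s>0$, and positive definiteness of $f$ (so $f(x_s)>0$ at the nonzero point $x_s$) yields $\underline{\alpha}^f(s)=f(x_s)>0$, which is the second part of Condition~\eqref{infdef}.

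The hard part will be the compactness of $C_s=[f\leq f(y_0)]$. Unlike the sup‑type case, the constraint region $[g\geq s]$ here is the exterior of a ball and is unbounded, so a priori the infimum of $f$ over it may be approached only as $\norm{y}\to+\infty$; the truncation to a compact set therefore rests entirely on the boundedness of the sublevel sets of $f$, i.e.\ on the behaviour of $f$ at infinity. I expect this truncation step to be the crux of the proof and would concentrate the effort there, the verifications of the preceding paragraphs being routine once the compact set $C_s$ is in hand.
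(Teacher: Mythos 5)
Your overall route is the paper's route: the paper gives no separate proof of this corollary, which is intended as the instance $g=\norm{\cdot}$ of the two propositions preceding it (the norm being continuous, hence upper semicontinuous, and positive definite, with $\inf_{\rd} g=0$). Your verifications of the second and third conditions of Prop.~\ref{positivedefinf}, and of the value $\underline{\alpha}^f(0)=\inf_{\rd}f=0$, are correct and match what the paper leaves implicit.

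However, your proposal stops exactly at the step that carries all the weight, and that step cannot be completed under the hypotheses as stated. Lower semicontinuity of $f$ makes $C_s=[f\leq f(y_0)]$ closed, but nothing in ``lower semicontinuous and positive definite'' makes it bounded, and without boundedness the truncation argument collapses. Indeed, the statement as written is false: take $f(x)=\norm{x}^2/(1+\norm{x}^4)$, which is continuous and positive definite, yet $\underline{\alpha}^f(s)=\inf_{\norm{y}\geq s}f(y)=0$ for every $s>0$ because $f$ vanishes at infinity, so Condition~\eqref{infdef} fails. The missing ingredient is level-boundedness of $f$, which is precisely the extra hypothesis of the proposition immediately preceding the corollary (the corollary's wording omits it, evidently by oversight, consistent with Hahn's setting where $f$ is radially unbounded). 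With level-boundedness restored, $[f\leq f(y_0)]$ is closed and bounded, hence compact, the set $[\norm{\cdot}\geq s]\cap C_s$ is nonempty and compact, and your argument --- which is exactly the paper's --- goes through verbatim. So your diagnosis that everything ``rests on the behaviour of $f$ at infinity'' was accurate, but a proof must resolve that point rather than defer it: either invoke the level-boundedness hypothesis explicitly, or observe that without it the claim is false.
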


\subsection{Continuity and Level-Boundeness}

We, now, present the continuity results for $\supfg{f}{g}$ and $\inffg{f}{g}$. We also study the asymptotic behaviour of them when $s$ goes to $+\infty$.   
\begin{theorem}[Continuity results for $\supfg{f}{g}$]
\label{continuity}
Let $f:\rd\mapsto \rr$ and $g:\rd\mapsto \rr$. Then:
\begin{enumerate}
\item If $f$ and $g$ are lower semicontinuous, $\supfg{f}{g}$ is lower semicontinuous;
\item If $g$ is lower-semicontinuous and level-bounded and $f$ is upper semicontinuous, then $\supfg{f}{g}$ is upper semicontinuous, finite-valued and $\sup$ in the definition of $\supfg{f}{g}$ can be replaced by a $\max$. If, moreover, $f$ is level-bounded then $\lim_{s\to +\infty} \supfg{f}{g}(s)=+\infty$.
\end{enumerate}
Finally, if $f$ is continuous and $g$ is lower semicontinuous and level-bounded then $\supfg{f}{g}$ is continuous.
\end{theorem}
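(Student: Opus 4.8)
The plan is to read the final statement as an immediate corollary of the two numbered items, exploiting the fact that, for a real-valued function, continuity is exactly the conjunction of lower and upper semicontinuity. First I would record the observation made at the end of the semicontinuity subsection: since $f$ is continuous, it is at every point \emph{both} lower and upper semicontinuous. This single remark is what lets me feed $f$ into item~1 (which wants $f$ lower semicontinuous) and into item~2 (which wants $f$ upper semicontinuous) at the same time.

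Next I would apply the two items in turn. For the lower semicontinuity of $\supfg{f}{g}$ I invoke item~1: its hypotheses require only that $f$ and $g$ be lower semicontinuous, and both hold here ($f$ because it is continuous, $g$ by assumption), so item~1 gives lower semicontinuity verbatim. For the upper semicontinuity I invoke item~2: its hypotheses ask that $g$ be lower semicontinuous and level-bounded and that $f$ be upper semicontinuous, all of which are met, so item~2 delivers upper semicontinuity \emph{and}, crucially, the finite-valuedness of $\supfg{f}{g}$ (together with $\sup=\max$). Having $\supfg{f}{g}$ finite-valued and both lower and upper semicontinuous, I conclude continuity directly from the paper's own definition of continuity for real-valued functions. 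No new estimate, Berge-type argument, or limiting computation is required beyond what items~1 and~2 already supply.

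The one place where I would be careful — and the only genuine subtlety — is the interface between the extended-real-valued nature of item~1 and the finite-valued notion of continuity: by the ``particular situations'' listed after the definition of $\supfg{f}{g}$, the value is $-\infty$ precisely on the empty sublevels, i.e. for $s<\inf_{\rd} g$. Thus the continuity asserted must be understood on the effective domain $[\inf_{\rd} g,+\infty)$, where item~2 guarantees finiteness; at the left endpoint only the one-sided (right-hand) behavior is relevant, and level-boundedness of $g$ (closedness from lower semicontinuity, boundedness from level-boundedness) is exactly what makes each nonempty sublevel $[g\leq s]$ compact and hence the value finite there. The heavy lifting being already done in items~1 and~2, the final statement presents no new obstacle: it is a direct assembly, the only task being to check that its hypotheses simultaneously satisfy the assumptions of both earlier items, which they manifestly do.
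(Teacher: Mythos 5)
Your treatment of the concluding sentence is fine as far as it goes: continuity of a real-valued function is indeed the conjunction of lower and upper semicontinuity, a continuous $f$ satisfies the hypotheses of both numbered items simultaneously, and your caveat about the effective domain (the value is $-\infty$ for $s<\inf_{\rd} g$, so finiteness and continuity really live on $[\inf_{\rd} g,+\infty)$) is a legitimate point that the paper itself glosses over. But this is only the last sentence of the theorem, and it is the trivial part. The statement you were asked to prove is the whole theorem, and your proposal takes items~1 and~2 as given rather than proving them; they are where all the mathematical content lies, and nothing in your write-up addresses them.

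Concretely, what is missing is the following. For item~1 the paper shows that the set-valued map $s\rightrightarrows M(s):=[g\leq s]$ is lower hemicontinuous, using the closed-set characterization of lower hemicontinuity together with a Painlev\'e--Kuratowski limit argument on an increasing subsequence $s_{n_k}\uparrow s$ (closedness of $M(s)$ coming from lower semicontinuity of $g$), and then invokes the lower-semicontinuity half of Berge's maximum theorem. For item~2 it shows that $M$ is upper hemicontinuous via a compactness-and-extraction argument (here level-boundedness of $g$ is what makes $M(s^*)\cap F$ compact and lets one pass to a convergent subsequence $y_{n_k}\to y^*$ with $g(y^*)\leq s$), invokes the upper-semicontinuity half of Berge's theorem, and separately establishes finiteness and $\sup=\max$ by a Weierstrass argument on the nonempty compact sets $[g\leq s]$. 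Finally, the claim $\lim_{s\to+\infty}\supfg{f}{g}(s)=+\infty$ under level-boundedness of $f$ requires its own direct argument (the paper's construction of $K:=\sup\{\norm{x}: f(x)\leq t\}$ and $s^*:=\inf\{g(x):\norm{x}=K+1\}$ and the ensuing contradiction). None of these arguments, nor any substitute for them, appears in your proposal, so as a proof of the theorem it has a major gap: it proves only the corollary-like final sentence and leaves the two substantive items unproven.
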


\begin{proof}[Proof of Th.~\ref{continuity}]
{\itshape 1}. Let $x\in\rd$. Note that if for all $y\in\rr^m$, $F(x,y)=f(x)$, then $\{(x,y)\in\rd\times \rr^m : F(x,y)\leq t\}=\{x\in\rd: f(x)\leq t\}\times \rr^m$. Thus, $F$ is lower/upper semicontinuous if and only if $f$ is. Now, to use Berge's theorem, we must prove that the multivalued map $M(s):=\{x\in\rd : g(x)\leq s\}$ for all $s\in\rr$ is lower hemicontinuous. Let $C$ be a closed subset of $\rd$, let $s\in\rr$ and $(s_n)_{n\in\nn}$ such that $(s_n)_{n\in\nn}$ converges to $s$ and $M(s_n)\subseteq C$ for all $n\in\nn$. Let us prove that $M(s)\subseteq C$. We can suppose that $s_n < s$ for all $n\in\nn$; otherwise $M(s)\subseteq M(s_n)$ for some $n\in\nn$ and the result holds. So we can extract an increasing subsequence $(s_{n_k})_{k\in\nn}$ converging to $s$. Then $M(s_{n_k})$ PK converges to $M(s)$ which is closed as $g$ is supposed to be lower semicontinuous and finally $M(s_n)$ converges to $M(s)$. We conclude, as $M(s)$ is the union of $M(s_n)$, that $M(s)\subseteq F$ and $M$ is lower hemicontinuous and by Berge's theorem $\supfg{f}{g}$ is lower semicontinuous.

{\itshape 2}. As $g$ is level-bounded and lower semicontinuous, the optimal value $\inf_{\rd} g$ is finite and for some $x\in\rd$, we have $g(x)=\inf_{\rd} g$. Hence, for all $s\geq \inf_{\rd} g$, the set $[g\leq s]$ is nonempty and compact (by assumption). Then, as $f$ is an upper semicontinuous function, for all $s\geq \inf_{\rd} g$, the supremum of $f$ on $[g\leq s]$ is finite and there exists $x\in\rd$ such that $g(x)\leq s$ and $f(x)=\sup_{[g\leq s]} f$. 
 
Let us prove that $\supfg{f}{g}$ is upper semicontinuous. 
Now, to use Berge's theorem, we must prove that the multivalued map $\rr\ni s\mapsto M(s):=\{x\in\rd : g(x)\leq s\}$ is upper hemicontinuous. Let $F$ be a closed subset of $\rd$, let $s\in\rr$ and $(s_n)_{n\in\nn}$ such that $(s_n)_{n\in\nn}$ converges to $s$ and $M(s_n)\cap F\neq \emptyset$ for all $n\in\nn$. Let us prove that $M(s)\cap F\neq \emptyset$. 
For  all $n\in\nn$, we have $M(s_n)\cap F \subseteq M(s^*)\cap F$ with $s^*=\sup_{n\in\nn} s_n$. There exists $y_n\in  M(s_n)\cap F$ which converges up to a subsequence to $y^*\in M(s^*)\cap F$ which is compact. As $g$ is lower semicontinuous $g(y^*)\leq \liminf_{k\to +\infty} g(y_{n_k})\leq \liminf_{k\to +\infty}s_{n_k}=s$ and $y^*\in F$. This proves that $M(s)\cap F\neq \emptyset$ and $M$ is upper hemicontinuous and by Berge's theorem $\supfg{f}{g}$ is upper semicontinuous. 

Finally, suppose that $f$ is, moreover, level-bounded. We have to prove that $\lim_{s\to +\infty} \supfg{f}{g}(s)=+\infty$. This is the same as for all $t\in\rr$ there exists $s^*\in\rr$ such that for all $s\in \rr$ satisfying $\supfg{f}{g}(s)\leq t$, we have $s\leq s^*$. So let $t\in \rr$ and let $s$ such that $\supfg{f}{g}(s)\leq t$. This implies that for all $x\in\rd$, such that $g(x)\leq s$, we have $f(x)\leq t$. We define $K:=\sup\{\norm{x} : f(x)\leq t\}$ which exists as $f$ is supposed to be level-bounded. Now we define $s^*:=\inf\{g(x) : \norm{x}=K+1\}$. We must have $s\leq s^*$. Indeed, otherwise, there exists $x^*$ such that $g(x^*)=s^*<s$ and $\norm{x^*}=K+1$. It follows that, for this $x^*$, we have $f(x^*)\leq t$ and thus $\norm{x^*}\leq K$ which is not possible.  
\end{proof}

We highlight the fact that if $f$ is lower semicontinuous then $\supfg{f}{g}$ can achieve $+\infty$ and being lower semicontinuous. For example, let us define $f(x)=1/x$ for all $x>0$ and 0 otherwise. Let $g$ be an increasing, lower semicontinuous and nonnegative function. Then $\supfg{f}{g}$ is finite on $\rr_-$ and equals to $+\infty$ on $\rr_+^*$. 

From the relation depicted at Eq.~\eqref{infsuprel} and the fact that $h$ is upper semicontinuous if and only if $-h$ is lower semicontinuous, we extend the results of Th.~\ref{continuity} to $\inffg{f}{g}$.     
\begin{corollary}[Continuity results for $\inffg{f}{g}$ - 1]
\label{infcontinuity}
Let $f:\rd\mapsto \rr$ and $g:\rd\mapsto \rr$. 
\begin{enumerate}
\item If $f$ and $g$ are upper semicontinuous, $\inffg{f}{g}$ is upper semicontinuous;
\item If $g$ is upper semicontinuous, $-g$ is level-bounded and $f$ is lower semicontinuous, then $\inffg{f}{g}$ is lower semicontinuous, finite-valued and $\inf$ in the definition of $\inffg{f}{g}$ can be replaced by a $\min$. 
\end{enumerate}
Finally, if $f$ is continuous, $g$ is upper semicontinuous and $-g$ level-bounded then $\inffg{f}{g}$ is continuous.
\end{corollary}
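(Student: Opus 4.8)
The plan is to deduce the entire corollary from Theorem~\ref{continuity} by means of the duality relation~\eqref{infsuprel}, which, read pointwise, says $\inffg{f}{g}(s) = -\supfg{(-f)}{(-g)}(-s)$. Setting $\tilde f := -f$ and $\tilde g := -g$, the strategy is to apply the two items of Theorem~\ref{continuity} to the pair $(\tilde f,\tilde g)$ and then transport the conclusions back through the outer negation and the reflection $\tau\colon s\mapsto -s$. The only elementary facts I need about these two operations are: negation exchanges lower and upper semicontinuity (stated already before the corollary); $\tau$ is a homeomorphism of $\rr$, so composing on the inside with $\tau$ preserves each type of semicontinuity (the superlevel set of $\phi\circ\tau$ is $\tau^{-1}$ of the superlevel set of $\phi$, hence open when $\phi$ is lower semicontinuous); and neither operation affects finiteness or the attainment of the extremum.

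First I would translate the hypotheses item by item. For the first item, ``$f,g$ upper semicontinuous'' means exactly ``$\tilde f,\tilde g$ lower semicontinuous'', so the first item of Theorem~\ref{continuity} makes $\supfg{\tilde f}{\tilde g}$ lower semicontinuous; composing with $\tau$ keeps it lower semicontinuous, and the outer minus sign turns it into an upper semicontinuous function, which is precisely $\inffg{f}{g}$. For the second item, ``$g$ upper semicontinuous and $-g$ level-bounded'' says exactly that $\tilde g$ is lower semicontinuous and level-bounded, while ``$f$ lower semicontinuous'' says $\tilde f$ is upper semicontinuous; the second item of Theorem~\ref{continuity} then gives that $\supfg{\tilde f}{\tilde g}$ is upper semicontinuous, finite-valued, with the supremum attained. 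Transporting back, $\inffg{f}{g}$ is lower semicontinuous and finite-valued, and since $\sup_{\tilde g(x)\le -s}\tilde f(x) = -\inf_{s\le g(x)} f(x)$, a maximizer for $\supfg{\tilde f}{\tilde g}$ at level $-s$ is simultaneously a minimizer for $\inffg{f}{g}$ at level $s$, so the $\inf$ is a $\min$.

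The concluding continuity claim then follows by combining the two items: when $f$ is continuous it is both lower and upper semicontinuous, so under ``$g$ upper semicontinuous and $-g$ level-bounded'' the first item makes $\inffg{f}{g}$ upper semicontinuous and the second item makes it lower semicontinuous, hence continuous. I do not expect a genuine obstacle, since the argument is purely a transfer through~\eqref{infsuprel}; the only places warranting care are bookkeeping ones. Specifically, the level-boundedness hypothesis must be phrased for $\tilde g=-g$, i.e. as ``$-g$ level-bounded'' (equivalently $g(x)\to-\infty$ as $\norm{x}\to+\infty$) rather than ``$g$ level-bounded'', and I must verify that the order-reversing reflection $\tau$ does not swap the two semicontinuity types beyond the single swap already produced by the outer negation — which it does not, precisely because $\tau$ is a homeomorphism.
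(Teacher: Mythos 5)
Your proposal is correct and is exactly the argument the paper intends: the paper derives the corollary from Theorem~\ref{continuity} via the relation~\eqref{infsuprel} and the exchange of semicontinuity types under negation, only stating this in one sentence where you carefully carry out the bookkeeping (including the harmless reflection $s\mapsto -s$ and the transfer of attainment of the extremum).
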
    
The only result that we cannot extend is the asymptotic behaviour of $\inffg{f}{g}$ at $+\infty$. The direct application of Th.~\ref{continuity} is the fact that $\lim_{s\to -\infty} \inffg{f}{g}(s)=-\infty$. The second statement of Corollary~\ref{infcontinuity} is not useful to prove that the function $\underline{\alpha}^f$ defined at Eq.~\eqref{hahndef} is continuous when $f$ and $g$ are continuous, $-\norm{\cdot}$ not being level-bounded. 

Therefore, we develop a direct technique to prove that $\inffg{f}{g}$ is lower semicontinuous when $f$ is lower semicontinuous and $g$ is upper semicontinuous. 

\begin{theorem}[Extended lower semicontinuity for $\inffg{f}{g}$]
Let $g:\rd\mapsto \rr$ be an upper semicontinuous function. If the following assertions hold:
\begin{enumerate}
\item If $\sup_{\rd} g<+\infty$, there exists $x\in\rd$ such that $g(x)=\sup_{\rd} g$;
\item There exists a set-valued map $C:\rr\rightrightarrows \rd$ compact-valued and such that for all $s>\inf_{\rd} g$ and all $t\in \rr\cap [s,\sup_{\rd} g]$:
\begin{equation}
\label{compactnesscond}
\inffg{f}{g}(s)=\inf_{x\in [s\leq g]\cap C(t)} f(x)
\end{equation}
\item $f$ is lower semicontinuous.
\end{enumerate}
Then $\inffg{f}{g}$ is lower semicontinuous.
\end{theorem}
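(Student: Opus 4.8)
The plan is to exploit the monotonicity of $\inffg{f}{g}$ (Prop.~\ref{monotony}) to reduce lower semicontinuity to a one-sided statement, and then to run a compactness argument built on Eq.~\eqref{compactnesscond}. Since $\inffg{f}{g}$ is increasing, its lower semicontinuity at a point $s$ is equivalent to left-continuity there: for any sequence $s_n\to s$, the terms with $s_n\geq s$ already satisfy $\inffg{f}{g}(s_n)\geq \inffg{f}{g}(s)$, so only the terms approaching $s$ from below can spoil the $\liminf$. Hence it suffices to fix $s\in\rr$ and a strictly increasing sequence $s_n\uparrow s$ and to prove $\lim_{n}\inffg{f}{g}(s_n)=\inffg{f}{g}(s)$; by monotonicity the limit exists and satisfies $\lim_n \inffg{f}{g}(s_n)\leq \inffg{f}{g}(s)$, so the whole content is the reverse inequality.

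Before that, I would dispose of the degenerate ranges, where Eq.~\eqref{compactnesscond} is not invoked. If $s\leq \inf_{\rd} g$, then $[s\leq g]=\rd$ on a left-neighbourhood of $s$, so $\inffg{f}{g}$ is locally constant equal to $\inf_{\rd} f$ and left-continuity is immediate. If $s>\sup_{\rd} g$ (possible only when $\sup_{\rd} g<+\infty$), then $[s_n\leq g]=\emptyset$ for $s_n$ close enough to $s$, so $\inffg{f}{g}(s_n)=+\infty$ and lower semicontinuity holds trivially. This leaves the main range $\inf_{\rd} g<s\leq \sup_{\rd} g$, where Assumption~1 guarantees, at the extreme value $s=\sup_{\rd} g$, that $[s\leq g]$ is nonempty, the nonemptiness being automatic for $s<\sup_{\rd} g$.

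The key device is to apply Eq.~\eqref{compactnesscond} with the single choice $t:=s$, which is legitimate for $s$ itself and, because $s_n<s\leq \sup_{\rd} g$, also legitimate for every $s_n$; this produces one fixed compact set $K:=C(s)$ serving all levels at once, so that $\inffg{f}{g}(s)=\inf_{[s\leq g]\cap K} f$ and $\inffg{f}{g}(s_n)=\inf_{[s_n\leq g]\cap K} f$. Since $g$ is upper semicontinuous, each superlevel set $[s_n\leq g]$ is closed, hence $[s_n\leq g]\cap K$ is compact; it is nonempty because $s_n<\sup_{\rd} g$ forces $[s_n\leq g]\neq\emptyset$ and $f$ is real-valued. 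As $f$ is lower semicontinuous it attains its infimum there, giving minimizers $x_n\in[s_n\leq g]\cap K$ with $f(x_n)=\inffg{f}{g}(s_n)$, and by compactness of $K$ a subsequence $x_{n_k}$ converges to some $x^\star\in K$.

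Passing to the limit is where the two semicontinuities must be combined carefully. From $g(x_{n_k})\geq s_{n_k}\to s$ I get $\liminf_k g(x_{n_k})\geq s$, while upper semicontinuity of $g$ gives $g(x^\star)\geq \limsup_k g(x_{n_k})\geq s$, so $x^\star\in[s\leq g]\cap K$. Lower semicontinuity of $f$ then yields $f(x^\star)\leq \liminf_k f(x_{n_k})=\lim_n \inffg{f}{g}(s_n)$, whence $\inffg{f}{g}(s)=\inf_{[s\leq g]\cap K} f\leq f(x^\star)\leq \lim_n\inffg{f}{g}(s_n)$, which is exactly the missing reverse inequality. I expect the main obstacle to be precisely this limit step: the constraint $g\geq s$ and the upper semicontinuity of $g$ push the bound on $g(x^\star)$ in opposite directions, and it is only the combination with $g(x_{n_k})\geq s_{n_k}\to s$ that forces $x^\star$ back into the feasible set $[s\leq g]$; the uniform compact set $K=C(s)$ obtained by freezing $t=s$ is the ingredient that makes this passage to the limit possible at all.
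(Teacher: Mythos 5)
Your proof is correct, and the heart of it coincides with the paper's: apply the compactness hypothesis to get a single compact set serving all levels of the sequence, take minimizers $x_n$ (which exist because $f$ is lower semicontinuous on a nonempty compact set), extract a convergent subsequence, use the upper semicontinuity of $g$ together with $g(x_{n_k})\geq s_{n_k}\to s$ to force the limit point into $[s\leq g]$, and conclude with the lower semicontinuity of $f$. Where you genuinely diverge is in the reduction. You invoke the monotonicity of $\inffg{f}{g}$ (Prop.~\ref{monotony}) to discard all terms $s_n\geq s$ and work only with increasing sequences $s_n\uparrow s$, which lets you freeze the parameter at $t=s$ and use the single set $C(s)$. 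The paper instead treats an arbitrary sequence $s_n\to s$, which requires a double subsequence extraction (first to realize the $\liminf$, then to bound the terms by $\min\{\sup_n s_n,\sup_{\rd} g\}$) and an application of hypothesis~\eqref{compactnesscond} at $t=s^*:=\sup_k s_{\varphi_1\circ\varphi_2(k)}$, a value possibly strictly larger than $s$ --- which is exactly why the hypothesis is stated for all $t\in\rr\cap[s,\sup_{\rd} g]$ rather than only $t=s$. Your route is shorter, eliminates the extractions, and incidentally shows that only the value $C(s)$ of the set-valued map is ever needed, a mild strengthening of the statement; the cost is the (already established) monotonicity result. Two points to tighten: discard the finitely many initial terms with $s_n\leq\inf_{\rd} g$ before invoking~\eqref{compactnesscond}, since the hypothesis only covers levels strictly above $\inf_{\rd} g$; and make the nonemptiness of $[s_n\leq g]\cap C(s)$ explicit via the equality~\eqref{compactnesscond} itself --- the right-hand side would be $+\infty$ over an empty set, whereas the left-hand side is the infimum of the real-valued $f$ over the nonempty set $[s_n\leq g]$ and hence $<+\infty$.
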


\begin{proof}
Let us take $s\in\rr$.

Suppose that $\inf_{\rd} g>-\infty$. If $s\leq \inf_{\rd} g$, $\inffg{f}{g}(s)$ is equal to $\inf_{\rd} f$. For all $t\in\rr$, we have $\inffg{f}{g}(t)\geq \inf_{\rd} f$. Hence any sequence $(s_n)_{n\in\nn}$ that converges to $s$ satisfies $\liminf_{n\to +\infty} \inffg{f}{g}(s_n)\geq \inffg{f}{g}(s)$ and we conclude that $\inffg{f}{g}$ is lower semicontinuous at $s$.

Assume that $\sup_{\rd} g$ is finite. If $s>\sup_{\rd} g$, the set $[s\leq g]$ is empty and thus the function $\inffg{f}{g}$ is constant and equal to $+\infty$ on $(\sup_{\rd} g,+\infty)$. Any sequence $(s_n)_{n\in\nn}$ converging to $s$ is eventually in $(\sup_{\rd} g,+\infty)$ and thus the limit of $(\inffg{f}{g}(s_n))_{n\in\nn}$ is equal to $+\infty$. 

Now, assume $s\in (\inf_{\rd} g,\sup_{\rd} g]$ if $\sup_{\rd} g$ is finite or $s\in  (\inf_{\rd} g,+\infty)$ if $\sup_{\rd} g=+\infty$ and consider a sequence $(s_n)_{n\in\nn}$ converging to $s$. We can assume that all the terms $s_n$ are strictly greater than $\inf_{\rd} g$. Let us pick a subsequence $(s_{\varphi_1(n)})_{n\in\nn}$ for which $\lim_{n\to +\infty} \inffg{f}{g}(s_{\varphi_1(n)})=\liminf_{n\to +\infty} \inffg{f}{g}(s_{n})$. We have either $\liminf_{n\to +\infty} \inffg{f}{g}(s_n)=+\infty$ (if $s=\sup_{\rd} g$ and $s_n$ is above $s$) or $\liminf_{n\to +\infty} \inffg{f}{g}(s_n)<+\infty$. For the first case, the lower semicontinuity at $s$ is obvious. Let us consider the second case, this is equivalent to $\lim_{n\to +\infty} \inffg{f}{g}(s_{\varphi_1(n)})=\liminf_{n\to +\infty} \inffg{f}{g}(s_{\varphi_1(n)})<+\infty$. Hence, this implies that for all $n\in\nn$, there exists $k\geq n$, $\inffg{f}{g}(s_{\varphi_1(k)})<+\infty$ which is equivalent to for all $n\in\nn$, there exists $k\geq n$, $s_{\varphi_1(k)}\leq \min\{\sup_{n\in\nn} s_n,\sup_{\rd} g\}$. Then there exists a subsequence $(s_{\varphi_1\circ \varphi_2(k)})_{k\in\nn}$ of $(s_{\varphi_1(k)})_{k\in\nn}$ for which
$s_{\varphi_1\circ \varphi_2(k)}\leq \min\{\sup_{n\in\nn} s_n,\sup_{\rd} g\}$ for all $k\in\nn$. Note that to take $\min\{\sup_{n\in\nn} s_n,\sup_{\rd} g\}$ serves to bound the sequence when $\sup_{\rd} g=+\infty$. In this case, the extraction is not necessary. When $\sup_{\rd} g$ is finite, some terms $s_n$ may be strictly greater than $\sup_{\rd} g$, but we want to keep those smaller than $\sup_{\rd} g$ and the extraction is mandatory in this case. Now, we define $s^*:=\sup_{k\in\nn}  s_{\varphi_1\circ \varphi_2(k)}\leq \sup_{\rd} g$. To simplify, let us rename temporarily this subsequence $(s_k)_{k\in\nn}$. 

By assumption, for all $k\in\nn$, $\inffg{f}{g}(s_k)=\inf\{f(x) : x\in [s_k\leq g]\cap C(s^*)\}$. As $g$ is upper semicontinuous, $s_k\leq \sup_{\rd}(g)$, $C(s^*)$ is compact and $[s_k\leq g]\cap C(s^*)\neq \emptyset$ then  $[s_k\leq g]\cap C(s^*)\neq \emptyset$ is nonempty and compact. As $f$ is lower semicontinuous, there exists $x_k\in [s_k\leq g]\cap C(s^*)$ such that $f(x_k)=\inffg{f}{g}(s_k)$. By compactness, there exists a subsequence $(x_{\psi(k)})_{k\in\nn}\in C(s^*)$ such that $x_{\psi(k)}\in [s_k\leq g]$ which converges to $x^*\in C(s^*)$. Moreover, as $g$ is upper semicontinuous, $s=\limsup_{k\to +\infty} s_{\psi(k)}\leq \limsup_{k\to +\infty} g(x_{\psi(k)})\leq g(x^*)$ and thus $x^*\in [s\leq g]\cap C(s^*)$. We conclude that $f(x^*)\geq \inffg{f}{g}(s)$. Finally, recalling that $s_k$ is actually $s_{\varphi_1\circ \varphi_2(k)}$ and as $f$ is lower semicontinuous, we have:
\[
\liminf_{n\to +\infty} \inffg{f}{g}(s_n)=\lim_{n\to +\infty} \inffg{f}{g}(s_{\varphi(n)})=\lim_{n\to +\infty} f(x_{\varphi(n)})\geq f(x^*)\geq\inffg{f}{g}(s)
\]
where $\varphi=\varphi_1\circ\varphi_2\circ \psi$.
\end{proof}

\begin{proposition}
Let assume that $g$ is upper semicontinuous unbounded from above or bounded from above with a global maximizer.
If $f$ is level-bounded and lower semicontinuous then Assumption~\eqref{compactnesscond} holds.
\end{proposition}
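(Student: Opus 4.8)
The plan is to build the set-valued map $C$ by hand out of the sublevel sets of $f$. The crucial observation is that since $f$ is lower semicontinuous and level-bounded, every sublevel set $[f\leq \alpha]$ with $\alpha\in\rr$ is closed and bounded, hence \emph{compact}, and moreover $f$ is bounded below on $\rd$ (it attains a global minimum, as recalled in the preliminaries). This is exactly the mechanism that turns level-boundedness into the compactness required by~\eqref{compactnesscond}. Accordingly, I would define
\[
C(t):=[f\leq \inffg{f}{g}(t)] \text{ whenever } \inffg{f}{g}(t)\in\rr, \quad \text{and} \quad C(t):=\emptyset \text{ otherwise,}
\]
so that $C$ is compact-valued on all of $\rr$. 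The point of pinning the threshold at the optimal value $\inffg{f}{g}(t)$ is that monotonicity will then force every minimizer at a \emph{smaller} level to stay below this threshold.

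Next I would fix $t$ in the relevant range and a level $s$ with $\inf_\rd g<s\leq t$, and check the identity~\eqref{compactnesscond} for this pair. The trivial inclusion $[s\leq g]\cap C(t)\subseteq [s\leq g]$ already gives $\inf_{[s\leq g]\cap C(t)}f\geq \inffg{f}{g}(s)$, so only the reverse inequality needs work. For that I first need $[t\leq g]\neq\emptyset$, which guarantees $\inffg{f}{g}(t)$ is finite (it lies between $\inf_\rd f>-\infty$ and the finite value of $f$ at any admissible point) and hence that $C(t)$ is genuinely compact. When $t<\sup_\rd g$ this nonemptiness is immediate from the definition of the supremum; this also covers the case $\sup_\rd g=+\infty$, where no real $t$ reaches the supremum. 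The one delicate case is $t=\sup_\rd g$ with $\sup_\rd g$ finite, and this is precisely where I invoke the assumed global maximizer of $g$ supplied by the second alternative in the hypothesis.

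With $[s\leq g]\supseteq [t\leq g]$ nonempty and closed (the latter because $g$ is upper semicontinuous), I would then produce a minimizer: any minimizing sequence for $\inffg{f}{g}(s)$ eventually lies in the compact set $[f\leq \inffg{f}{g}(s)+1]$, so by lower semicontinuity the infimum is attained at some $x_s\in[s\leq g]$ with $f(x_s)=\inffg{f}{g}(s)$. Since $s\leq t$, Prop.~\ref{monotony} yields $\inffg{f}{g}(s)\leq \inffg{f}{g}(t)$, whence $f(x_s)\leq \inffg{f}{g}(t)$ and therefore $x_s\in[s\leq g]\cap C(t)$. This gives $\inf_{[s\leq g]\cap C(t)}f\leq f(x_s)=\inffg{f}{g}(s)$, and combined with the trivial inequality above we obtain~\eqref{compactnesscond}.

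I do not expect any single inequality to be the real difficulty; the argument for fixed $(s,t)$ is short once the compactness of $C(t)$ and the monotonicity of $\inffg{f}{g}$ are in place. The main obstacle is rather the bookkeeping at the boundary $t=\sup_\rd g$: making sure $C(t)$ is a \emph{compact} set there forces one to establish the finiteness of $\inffg{f}{g}(t)$, and it is exactly the dichotomy in the hypothesis on $g$ (unbounded from above, or bounded from above with a maximizer) that is engineered to rule out the degenerate situation in which $[t\leq g]$ is empty and $\inffg{f}{g}(t)=+\infty$, which would make $[f\leq \inffg{f}{g}(t)]=\rd$ noncompact.
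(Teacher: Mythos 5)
Your proof is correct and follows essentially the same approach as the paper: both take $C(t)$ to be a sublevel set of $f$, which is compact because $f$ is lower semicontinuous and level-bounded, and both use the dichotomy on $g$ only to guarantee a feasible point at the boundary level $t=\sup_{\rd}g$. The only (harmless) difference is that the paper pins the threshold at $f(x_t)$ for an arbitrary feasible $x_t\in[t\leq g]$, which makes the verification of~\eqref{compactnesscond} immediate, whereas your choice $[f\leq \inffg{f}{g}(t)]$ requires the extra (correctly supplied) steps of attaining the infimum at level $s$ and invoking Prop.~\ref{monotony}.
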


\begin{proof}
It suffices to define the set-valued map, for all $t\in\rr$:
\[
C(t):=\left\{
\begin{array}{lr}
\emptyset & \text{ if } \displaystyle{t>\sup_{\rd} g}\\
\displaystyle{\{y\in\rd : f(y)\leq f(x_t)\}} \text{ where } \displaystyle{t\leq g(x_t)} & \text{ if } \displaystyle{t\in\rr \cap [-\infty,\sup_{\rd} g]}
\end{array}
\right.
\] 
\end{proof}

\begin{corollary}
The function $\underline{\alpha}^f$ and $\overline{\alpha}^f$ of Eq.~\eqref{hahndef}
are increasing, continuous and level-bounded when $f$ is continuous and level-bounded.
\end{corollary}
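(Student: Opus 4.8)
The plan is to recognize both $\overline{\alpha}^f$ and $\underline{\alpha}^f$ as the parametric functions $\supfg{f}{g}$ and $\inffg{f}{g}$ for the particular choice $g=\norm{\cdot}$, and then to feed the machinery already developed with the properties of this specific $g$. The norm is continuous, hence both lower and upper semicontinuous; it is level-bounded since $\norm{x}\to+\infty$ as $\norm{x}\to+\infty$; and $\sup_{\rd}\norm{\cdot}=+\infty$. These are the three structural facts about $g$ I would extract before invoking the general theorems.

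For $\overline{\alpha}^f=\supfg{f}{g}$ everything follows from the results on $\supfg{f}{g}$. Monotonicity is immediate from Prop.~\ref{monotony}. Continuity is the final assertion of Th.~\ref{continuity}, whose hypotheses hold since $f$ is continuous and $g=\norm{\cdot}$ is lower semicontinuous and level-bounded. Level-boundedness of $\overline{\alpha}^f$ is exactly the asymptotic conclusion $\lim_{s\to+\infty}\supfg{f}{g}(s)=+\infty$ in the second item of Th.~\ref{continuity}, whose extra hypothesis ``$f$ level-bounded'' is in force. So the $\overline{\alpha}^f$ part costs nothing new.

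For $\underline{\alpha}^f=\inffg{f}{g}$ monotonicity is again Prop.~\ref{monotony}, and upper semicontinuity is the first item of Cor.~\ref{infcontinuity} (both $f$ and $\norm{\cdot}$ being upper semicontinuous). The genuinely delicate point---the one the surrounding text flags---is lower semicontinuity: the second item of Cor.~\ref{infcontinuity} is unavailable because $-\norm{\cdot}$ is not level-bounded. The plan is therefore to call on the extended lower semicontinuity theorem for $\inffg{f}{g}$. Its three hypotheses must be checked for $g=\norm{\cdot}$: condition (1) is vacuous since $\sup_{\rd}\norm{\cdot}=+\infty$; condition (3) holds because $f$ is continuous, hence lower semicontinuous; and the crux is the compactness condition~\eqref{compactnesscond}, which I would obtain from the Proposition immediately following that theorem, since $g=\norm{\cdot}$ is upper semicontinuous and unbounded from above while $f$ is lower semicontinuous and level-bounded, so the required compact-valued map $C$ exists. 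Combining lower and upper semicontinuity yields continuity of $\underline{\alpha}^f$.

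Finally, level-boundedness of $\underline{\alpha}^f$ is the one piece not covered by the earlier statements, and it follows directly from level-boundedness of $f$: given $M\in\rr$, level-boundedness of $f$ furnishes $r>0$ with $f(y)>M$ whenever $\norm{y}>r$, so that for every $s>r$ any $y$ with $\norm{y}\geq s$ satisfies $\norm{y}>r$, whence $\underline{\alpha}^f(s)=\inf_{\norm{y}\geq s} f(y)\geq M$, giving $\lim_{s\to+\infty}\underline{\alpha}^f(s)=+\infty$. The main obstacle is the verification of the compactness condition~\eqref{compactnesscond} for the non-level-bounded constraint $-\norm{\cdot}$, which is precisely why the extended theorem, rather than Cor.~\ref{infcontinuity}, must be used.
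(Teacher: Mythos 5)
Your proposal is correct and follows exactly the route the paper intends: the corollary is stated without proof precisely because it is the assembly of Prop.~\ref{monotony}, Th.~\ref{continuity}, Cor.~\ref{infcontinuity}, the extended lower semicontinuity theorem and the proposition verifying condition~\eqref{compactnesscond}, all applied with $g=\norm{\cdot}$. You also correctly identify and fill the one genuine gap left implicit by the paper --- the level-boundedness of $\underline{\alpha}^f$, which none of the preceding statements covers --- with a valid direct argument from the level-boundedness of $f$.
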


%

\section{Conclusion and Perspectives}
\label{conclusion}
In this paper, we have formally proved continuity properties for comparison type functions. This extends the results obtained for classical functions used in stability theory. We also study two algebraic properties : the level-boundedness and the positive definiteness of those parametric optimization value functions.

Future works should contain a study of continuity results of $\inffg{f}{g}$ for the case where $f$ is upper semicontinuous and $g$ is continuous and level-bounded. Indeed, in Example~\ref{exmupper}, the function $f$ is upper semicontinuous at 0 (continuous outside 0) and $g$ is continuous and level-bounded but the function $\inffg{f}{g}$ is continuous everywhere. The future works should explain if this is due to the example particularity or this result could be generalized.

\vspace{0,2cm}

\noindent {\bf Data Availability Statement}. Data availability is not applicable to this article as no new data were created or analysed in this study.

\bibliographystyle{alpha} 
\bibliography{khalilshortbib}

\end{document}